\title{Linear regression over the max-plus semiring: \\  algorithms and applications}
\author{James Hook}
\newtheorem{theorem}{Theorem}[section]
\newtheorem{lemma}[theorem]{Lemma}
\newtheorem{proposition}[theorem]{Proposition}
\newtheorem{example}[theorem]{Example}
\DeclareFontFamily{OT1}{pzc}{}
\DeclareFontShape{OT1}{pzc}{m}{it}{<-> s * [1.10] pzcmi7t}{}
\DeclareMathAlphabet{\mathpzc}{OT1}{pzc}{m}{it}
\DeclareFontFamily{OT1}{pzc}{}
\DeclareFontShape{OT1}{pzc}{m}{it}{<-> s * [1.10] pzcmi7t}{}
\DeclareMathAlphabet{\mathpzc}{OT1}{pzc}{m}{it}
\def\uc#1{\mathcal{#1}}
\def\Rmax{\mathbb{R}_{\max}}
\def\Rmin{\mathbb{R}_{\min}}
\def\R{\mathbb{R}}
\def\col{\hbox{\normalfont col}}
\def\support{\hbox{\normalfont support}}
\def\pattern{\hbox{\normalfont pattern}}
\def\Cl{\hbox{\normalfont Cl}}
\def\relint{\hbox{\normalfont relint}}
\def\uc{\mathcal}
\newtheorem{problem}[theorem]{Problem}
\theoremstyle{definition}
\begin{document}
\maketitle
\begin{abstract}

In this paper we present theory, algorithms and applications for regression over the max-plus semiring. We show how max-plus $2$-norm regression can be used to 
obtain maximum likelihood estimates for three different inverse problems. Namely inferring a max-plus linear dynamical systems model from a noisy time series recording, inferring the edge lengths of a network from shortest path information and fitting a max-plus polynomial function to data.

\end{abstract}

\section{Introduction}

Max-plus algebra concerns the max-plus semiring $\Rmax=[\R\cup\{-\infty\},\oplus,\otimes]$, with
\begin{equation}
a\oplus b=\max\{a,b\}, \quad a\otimes b=a+b, \quad \hbox{for all $a,b\in\Rmax$}.
\end{equation}
A max-plus matrix is an array of elements from $\Rmax$ and max-plus matrix multiplication is defined in analogy to the classical (i.e. not max-plus) case. For $A\in\Rmax^{n\times d}$ and $B\in\Rmax^{d\times m}$ we have $A\otimes B\in\Rmax^{n\times m}$ with 
\begin{equation}
(A\otimes B)_{ij}=\max_{k=1}^{d}(a_{ij}+b_{kj}),
\end{equation}
for $i=1,\dots,n$, $j=1,\dots,m$. Max-plus algebra has found a wide range of applications in operations research, dynamical systems  and control \cite{butk10,heid2006,DESCHUTTER20011049}. In this paper we make a detailed study of the max-plus $p$-norm regression problem with a view to developing new algorithms for max-plus algebraic data analysis.
\begin{problem}\label{mppnreg}
For $A\in\Rmax^{n\times d}$,  $\bm{y}\in\Rmax^n$ and $p\geq 1$, we seek
\begin{equation}\label{red443}
\min_{\bm{x}\in\Rmax^d}\|A\otimes \bm{x}-\bm{y}\|_{2}.
\end{equation}
\end{problem}
Problem~\ref{mppnreg} has already received some attention in the $\infty$-norm case, in connection with the development of methods for solving max-plus linear systems exactly, which have applications in scheduling \cite[Chapter 3]{butk10}. In the  $\infty$-norm case it is possible to compute an optimal solution with cost $\uc{O}(nd)$. However, the  $\infty$-norm residual does not model any typical noise process and consequently this regression problem is not directly useful for solving practical inverse problems.  The $2$-norm residual models Gaussian noise and is consequently the most widely used residual in classical inverse problems. In Section~\ref{2normo} we show that the $2$-norm residual is non-smooth and non-convex, which makes it difficult to optimize. Indeed, we show further that even determining whether a point $\bm{x}\in\Rmax^d$ is a local minimum of the residual surface is an NP-hard problem. However, in spite of these apparent difficulties we find that a variant of Newton's method with undershooting is able to quickly return approximate solutions that are sufficiently close to optimal to provide good estimates for the inverse problems that we investigate in Sections~\ref{tss},\ref{netset} and~\ref{polf}.

\medskip

Virtually every application of max-plus algebra in dynamical systems and control exploit its ability to model certain classically non-linear phenomena in a linear way, as illustrated in the following example. 
\begin{example}Consider a distributed computing system in which $d$ processors iterate a map in parallel. At each stage processor $i$ must wait until it has received input from its neighboring processors before beggining its next local computation. Then after completing its local computation it must broadcast some output to its neighboring processors. Define the vectors of update times $\bm{t}(0),\dots,\bm{t}(N)\in\Rmax^d$, by $\bm{t}(n)_{i}=$ the time at which processor $i$ completes its $n$th local computation. These update times can be modeled by 
\begin{equation}\label{ffffffff}
\bm{t}(n+1)=M\otimes \bm{t}(n), \quad \hbox{for $n=0,\dots,N-1$,}
\end{equation}
where $M\in\Rmax^{d\times d}$ is the max-plus matrix given by
\begin{equation}
m_{ij}=\left\{\begin{array}{cc} a_{i}+c_{ij}, & \hbox{if $j\in J_{i}$}, \\ -\infty, & \hbox{otherwise}, \end{array}\right.
\end{equation}
where $a_{i}$ is the time taken for processor $i$'s local computation, $c_{ij}$ is the time taken for communication from processor $j$ and processor $i$ receives input from the processors $J_{i}\subset\{1,\dots,d\}$, for $i,j=1,\dots,d$. The update rule \eqref{ffffffff} constitutes a max-plus linear dynamical system. By studying the max-plus algebraic properties of the matrix $M$ we can now predict the behavior of the system, for example computing its leading eigenvalue to determine the average update rate of the computations iteration. 
\end{example}

Using petri-net models, such max-plus linear models can be derived for more complicated systems of interacting timed events  \cite[Chapter 7]{heid2006}. These linear models can be extended by introducing stochasticity, which in the above example could model random variability in the time taken for messages to pass through the computer network \cite[Chapter 11]{hook2013,heid2006}, by allowing the system to switch between one of several governing max-plus linear equations \cite{vandenBoom2012}, or by including a controller input \cite{DESCHUTTER20011049}. This approach has been used to model a wide variety of processes including the Dutch railway system \cite[Chapter 8]{heid2006}, mRNA translation \cite{BRACKLEY2012128} and the Transmission Control Protocol (TCP) \cite{Baccelli:2000:TML:347057.347548}. 

In this context \emph{forwards problems} arise by presupposing a dynamical systems model then asking questions about how its orbits must behave. Conversely an \emph{inverse problem} is to infer a dynamical systems model from an empirical time series recording. In the control theory literature this inverse problem is referred to as \emph{system identification}. For example in \cite{Farahani2014,1184996,7085024,7082376} the authors present methods for system identification of stochastic max-plus linear control systems. These methods, which can be applied to a very wide class of system, with non-Gaussian noise processes, work by formulating a non-linear programming problem for the unknown system parameters, which is then solved using one of several possible standard gradient based algorithm. However, the resulting problems are necessarily non-smooth and non-convex, which makes the optimization difficult. 

Since these optimization problems are very complicated and difficult to solve our approach is to study them in the simplest possible setting, which we take to be Problem~\ref{mppnreg}. A great deal of theory has already been developed for max-plus linear algebra and these results are also more easily utilized in this simpler setting. In Section~\ref{tss} we show how max-plus $2$-norm regression can be used to obtain maximum likelihood estimates for the inverse problem of determining a max-plus linear dynamical systems model from a noisy time series recording.  

\begin{figure}
\begin{center}

\subfigure[]{\begin{tikzpicture}[scale=0.5,-,>=stealth',shorten >=1pt,auto,node distance=1.5cm,main node/.style={font=\sffamily\footnotesize\bfseries}]

  \node[main node] (x1) at (0,0) {$x(1)$};
  \node[main node] (x2) at (0,-2) {$\cdots$};
    \node[main node] (x3) at (0,-4) {$x(n)$};

  \node[main node] (v1) at (4,-1) {$y(1)$};
  \node[main node] (vv1) at (4,-2) {$\cdots$};
 \node[main node] (v2) at (4,-3) {$y(d)$};

  \node[main node] (z1) at (8,0) {$z(1)$};
  \node[main node] (z2) at (8,-2) {$\cdots$};
    \node[main node] (z3) at (8,-4) {$z(m)$};

  \path[every node/.style={font=\sffamily\footnotesize}]

    (x1) edge [->,line width =0.25mm] node [left]  {} (v1)
    (x1) edge [->,line width =0.25mm] node [near end] {} (v2)
    
     (x2) edge [->,line width =0.25mm] node  {} (v1)
    (x2) edge [->,line width =0.25mm] node [near end] {} (v2)

 (x3) edge [->,line width =0.25mm] node  {} (v1)
    (x3) edge [->,line width =0.25mm] node [right] {} (v2)

    (z1) edge [<-,line width =0.25mm] node [left]  {} (v1)
    (z1) edge [<-,line width =0.25mm] node [near end] {} (v2)
    
     (z2) edge [<-,line width =0.25mm] node  {} (v1)
    (z2) edge [<-,line width =0.25mm] node [near end] {} (v2)

 (z3) edge [<-,line width =0.25mm] node  {} (v1)
    (z3) edge [<-,line width =0.25mm] node [right] {} (v2);

 \end{tikzpicture}}\hspace{1cm}\subfigure[]{ \begin{tikzpicture}[scale=0.5,-,>=stealth',shorten >=1pt,auto,node distance=1.5cm,main node/.style={font=\sffamily\footnotesize\bfseries}]

  \node[main node] (x1) at (0,0) {$x(1)$};
  \node[main node] (x2) at (0,-2) {$\cdots$};
    \node[main node] (x3) at (0,-4) {$x(n)$};

  \node[main node] (v1) at (4,-1) {$y(1)$};
  \node[main node] (vv1) at (4,-2) {$\cdots$};
 \node[main node] (v2) at (4,-3) {$y(d)$};

  \path[every node/.style={font=\sffamily\footnotesize}]

    (x1) edge [<->,line width =0.25mm] node [left]  {} (v1)
    (x1) edge [<->,line width =0.25mm] node [near end] {} (v2)
    
     (x2) edge [<->,line width =0.25mm] node  {} (v1)
    (x2) edge [<->,line width =0.25mm] node [near end] {} (v2)

 (x3) edge [<->,line width =0.25mm] node  {} (v1)
    (x3) edge [<->,line width =0.25mm] node [right] {} (v2);

 \end{tikzpicture}}
 \caption{Some simple network structures.}\label{smallnetfig}
\end{center}
\end{figure}
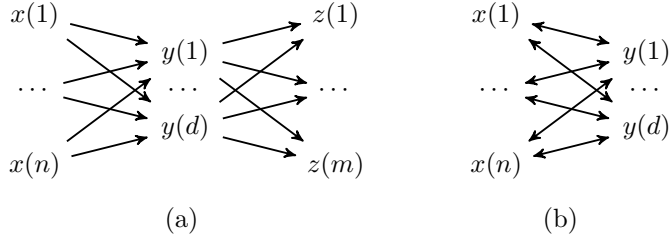

\medskip
 
Tropical algebra is the more general filed of mathematics encompassing any semiring whose `addition' operation is max or min, for example the min-plus and max-times semirings.
Min-plus algebra concerns the min-plus semiring $\Rmin=[\R\cup\{+\infty\},\boxplus,\boxtimes]$, with
\begin{equation}
a\boxplus b=\min\{a,b\}, \quad a\boxtimes b=a+b, \quad \hbox{for all $a,b\in\Rmin$}.
\end{equation}
The min-plus and max-plus semirings are isomorphic via the map $h:\Rmax\leftrightarrow \Rmin$, with $h(x)=-x$. Applied componentwise this map also preserves the $p$-norm of a vector, so that the max-plus and min-plus regression problems are mathematically equivalent. Min-plus matrix algebra naturally describes shortest paths through graphs, as illustrated in the following examples.

\begin{example}\label{integg1}Consider the network illustrated in Figure~\ref{smallnetfig} (a). We can think of the $x$ vertices as starting points, the $y$ vertices as transport hubs and the $z$ vertices as destinations. Suppose that $M_{L}\in\Rmin^{n\times d}$ and $M_{R}\in\Rmin^{d\times m}$ are min-plus matrices such that $(m_{L})_{ik}$ is the length of the edge from $x(i)$ to $y(k)$ and $(m_{R})_{kj}$ is the length of the edge from $y(k)$ to $z(j)$. Then $D=M_{L}\boxtimes M_{R}$ is the $n\times m$ min-plus matrix such that 
\begin{equation}
d_{ij}=\min_{k=1}^{d}\big((m_{L})_{ik}+(m_{R})_{kj}\big)
\end{equation}
is the length of the shortest path from $x(i)$ to $z(j)$, for $i=1,\dots,n$, $j=1,\dots,m$. 
\end{example}
\begin{example}\label{integg2}
For the network illustrated in Figure~\ref{smallnetfig} (b) suppose that $M\in\Rmin^{n\times d}$ is the min-plus matrix such that $m_{ij}$ is the length of the edge between $x(i)$ and $y(j)$. Then 
\begin{equation}
D=I\boxplus M\boxtimes M^{\top} \boxplus (M\boxtimes M^{\top})^{\boxtimes 2} \boxplus \cdots \boxplus (M\boxtimes M^{\top})^{\boxtimes( n-1)},
\end{equation}
is the $n\times n$ min-plus matrix such that $d_{ij}$ is the length of the shortest path from $x(i)$ to $x(j)$, where $I$ is the min-plus identity matrix with zeros on the diagonal an infinities off of the diagonal. 
\end{example}

Therefore min-plus matrix multiplication and addition are forwards operators that map local information about edges to global information about shortest paths. In this context an inverse problem is to infer information about the edges from possibly noisy or partial information about shortest paths. In Section~\ref{netset} we show how min-plus low-rank approximate matrix factorization can be used to obtain maximum likelihood estimates for some of these inverse problems. We show further how this process can be applied to more general network structures to provide a kind of min-plus model order reduction, that could be useful for characterizing networks or extracting useful features to characterize individual vertices in a network. The basis for our max-plus low-rank approximate matrix factorization comes from the previously developed regression algorithms.
\medskip 

Max-times algebra concerns the max-times semiring, which is the algebra of the non-negative real numbers along with the binary operations max and times. Although max-plus and max-times are isomorphic as algebraic structure, via the map $h:\R_{\max +}\mapsto \R_{\max \times}$, defined by $h(x)=\log(x)$, this isomorphism does not preserve any $p$-norm and consequently approximation in max-plus is not compatible with approximation in max-times. Max-times approximate low-rank matrix factorization has been explored as a an alternative and companion to classical non-negative matrix factorization \cite{capricorn,cancer,latitude}. Intuitively non-negative matrix factorization represents each component of a whole object as a sum of its parts, whilst in max-times factorization each component of an object is represented by a single part in a `winner takes all' regime. Although max-plus and max-times inverse problems are not isomorphic there are some clear similarities between them, most strikingly how the max operation introduces non-differentiability and results in large open patches of the residual having zero derivative with respect to certain variables.

\medskip 

The remainder of this paper is organized as follows. In Section~\ref{reg} we review what is known for the $\infty$-norm regression problem before developing some theory and algorithms for the $2$-norm case. Then in Sections~\ref{tss}~,\ref{netset} and~\ref{polf} we show how max-plus $2$-norm regression can be used to obtain maximum likelihood estimates for three different inverse problems. We also include an appendix, which contains an algorithm for exactly solving the max-plus $2$-norm regression problem, a proof of the result that determining whether a point is a local minimum is an NP-hard problem and an algorithm for computing symmetric min-plus low-rank approximate matrix factorizations.

\section{Max-plus regression}\label{reg}
The column space of a max-plus matrix $A\in\Rmax^{n\times d}$ is simply the image of the matrix vector multiplication map
\begin{equation}
\col(A)=\{A\otimes \bm{x}~:~\bm{x}\in\Rmax^d\}.
\end{equation}
Just as in the classical case, the $p$-norm regression problem can be written as an optimization over the column space of the matrix
\begin{equation}\label{colview}
\min_{\bm{x}\in\Rmax^d}\|A\otimes \bm{x}-\bm{y}\|_{p}=\min_{\bm{z}\in\col(A)}\|\bm{z}-\bm{y}\|_{p}.
\end{equation}
Understanding the geometry of the column space is therefore key to understanding the regression problem. 
\begin{example}\label{inf2egg}
Consider 
$$
A=\left[\begin{array}{cc} 0 & 0 \\ 1 & 0 \\ 0 & 1 \end{array}\right], \quad \bm{y}=\left[\begin{array}{cc} 1  \\ 1  \\ 1 \end{array}\right].
$$
The column space of $A$ is given by the union of two simplices  
$$
\col(A)=\{[x_{1},x_1+1,x_2+1]^{\top}~:~x_2 \leq x_1 \leq x_2+1\}\bigcup\{[x_2,x_1+1,x_2+1]^{\top}~:~x_1 \leq x_2 \leq x_1+1\}.
$$
Equivalently $\col(A)$ is a prism with an L-shaped cross section
$$
\col(A)=\{\bm{l}+[\alpha,\alpha,\alpha]^{\top}~:~\bm{l} \in L,~\alpha\in\Rmax\},
$$
where
$$
L=\{[0,t,0]^{\top}~:~t\in[0,1]\}\bigcup \{[0,0,t]^{\top}~:~t\in[0,1]\}.
$$
Now consider Problem~\ref{mppnreg} with $p=\infty$. Figure~\ref{pinfp2} (a) displays the column space of $A$ along with the target vector $\bm{y}$. We have also plotted the ball 
$$
B_{\infty}(\bm{y},1/2)=\{\bm{y'}\in\Rmax^3~:~\|\bm{y}'-\bm{y}\|_{\infty}=1/2\},
$$
which is the smallest such ball that intersects $\col(A)$. Therefore the minimum value of the residual is $1/2$ and the closest points in the column space are given by the L-shaped 
set 
$$
\arg\min_{\bm{z}\in\col(A)}\|\bm{z}-\bm{y}\|_{\infty}=\{[1/2,3/2,t]^{\top}~:~t\in[1/2,3/2]\}\bigcup \{[1/2,t,3/2]^{\top}~:~t\in[1/2,3/2]\}.
$$
Next consider Problem~\ref{mppnreg} with $p=2$. Figure~\ref{pinfp2} (b) displays the column space of $A$ along with the target vector $\bm{y}$. We have also plotted the ball 
$$
B_{2}(\bm{y},1/\sqrt{2})=\{\bm{y'}\in\Rmax^3~:~\|\bm{y}'-\bm{y}\|_{2}=1/\sqrt{2}\},
$$
which is the smallest such ball that intersects $\col(A)$. Therefore the minimum value of the residual is $1/\sqrt{2}$ and the closest points in the column space are given by
$$
\arg\min_{\bm{z}\in\col(A)}\|\bm{z}-\bm{y}\|_{\infty}=\{[1/2,3/2,1]^{\top},[1/2,1,3/2]^{\top}\}.
$$

\end{example}

\begin{figure}[t]
\begin{center}
\subfigure[$p=\infty.$]{\includegraphics[scale=0.225]{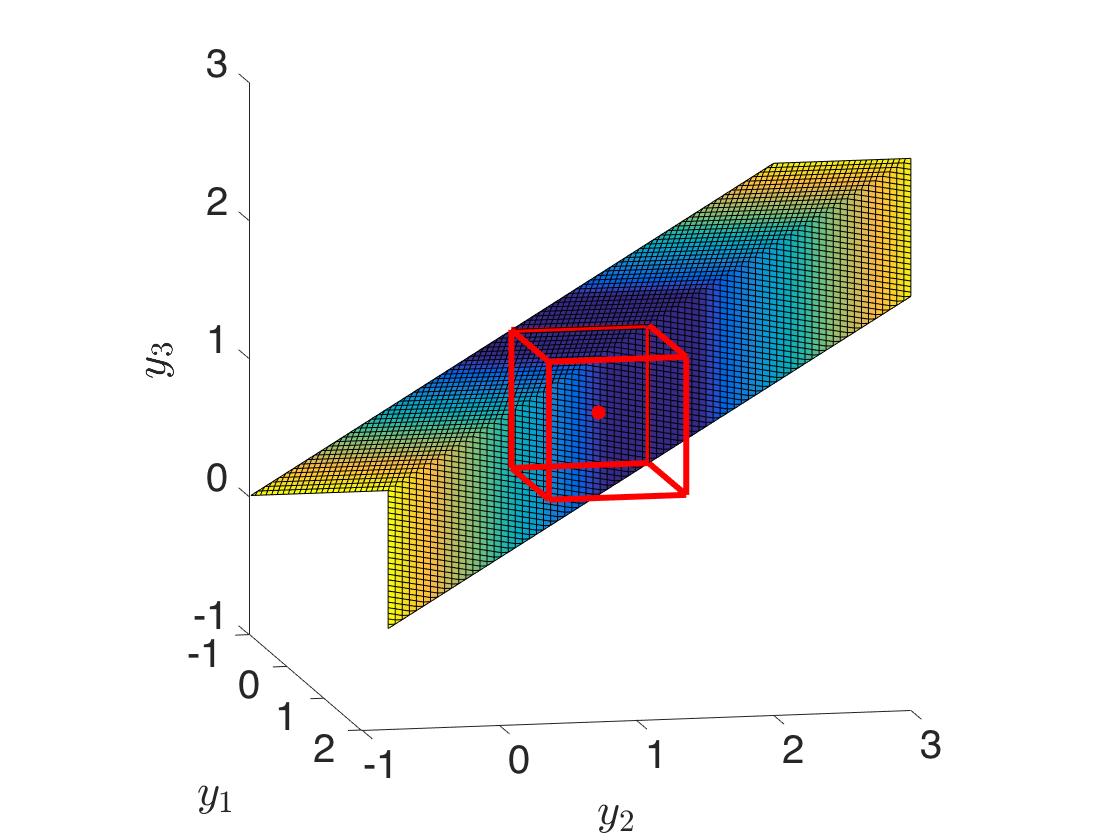}}\subfigure[$p=2.$]{\includegraphics[scale=0.225]{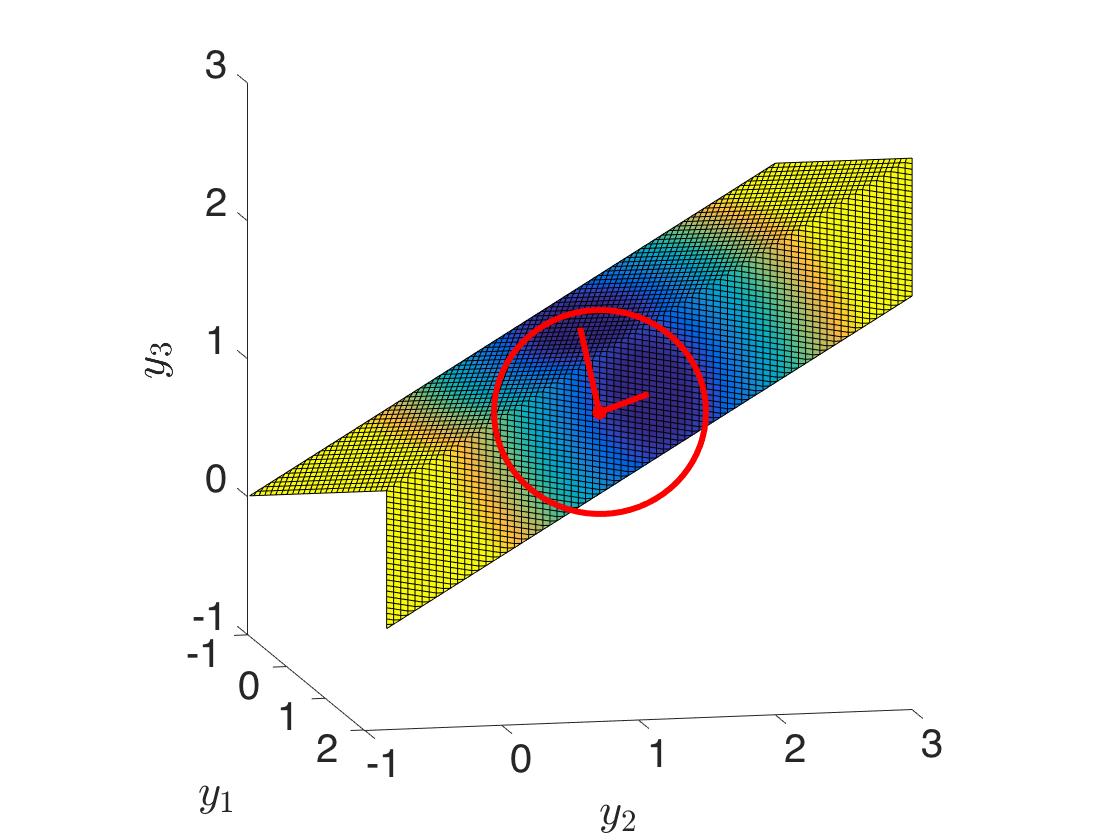}} 
\caption{Column space view of max-plus regression.}\label{pinfp2}
\end{center}
\end{figure}

\subsection{$\infty$-norm regression}

This variant of Problem~\ref{mppnreg} has been previously studied. See \cite[Section 3.5]{butk10} and the references therein. 
We saw in Example~\ref{inf2egg} that the max-plus $\infty$-norm regression problem could support multiple optimal solutions comprising a non-convex set. However we find that the max-plus $\infty$-norm regression problem is convex with respect to max-plus algebra and that we are able to very easily compute an optimal solution for it.

A function $f:\mathbb{R}_{\max}^{n}\mapsto\mathbb{R}_{\max}$ is \emph{max-plus convex} if for all $\bm{x},\bm{y}\in\mathbb{R}_{\max}^{n}$, and $\lambda,\mu\in\mathbb{R}_{\max}^{n}$ such that $\lambda\oplus \mu=0$,  we have
\begin{equation}
f(\lambda\otimes \bm{x}\oplus \lambda\otimes \bm{y})\leq \lambda\otimes f(\bm{x})\oplus \mu\otimes f(\bm{y}).
\end{equation}
Similarly a set  $X\subset \mathbb{R}_{\max}^{n}$  is \emph{max-plus convex} if $\bm{x},\bm{y}\in X$, and $\lambda,\mu\in\mathbb{R}_{\max}^{n}$ such that $\lambda\oplus \mu=0$, we have $\lambda\otimes \bm{x}\oplus \lambda\otimes \bm{y}\in X$. See e.g. \cite{Gaubert2006}. It follows that the minima of a max-plus convex function form a max-plus convex set and therefore that any local minimum is also a global minimum and that the set of all global minima form a single path connected set. The following result is straightforwards to prove.

\begin{proposition}\label{mpconvthm}
Let $A\in\Rmax^{n\times d}$ and $\bm{y}\in\Rmax^n$, then $R_{\infty}:\Rmax^{n}\mapsto \Rmax$, defined by $R_{\infty}(\bm{x})=\|A\otimes \bm{x}-\bm{y}\|_{\infty}$, is max-plus convex.
\end{proposition}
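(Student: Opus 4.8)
The plan is to combine the max-plus linearity of the map $\bm{x}\mapsto A\otimes\bm{x}$ with an elementary scalar inequality. First I would fix $\bm{x},\bm{x}'\in\Rmax^n$ and scalars $\lambda,\mu\in\Rmax$ with $\lambda\oplus\mu=0$, so that $\lambda,\mu\le 0$ and at least one of them equals $0$, and write $\bm{u}=A\otimes\bm{x}$, $\bm{v}=A\otimes\bm{x}'$. Since matrix-vector multiplication distributes over $\oplus$ and commutes with multiplication by a scalar, $A\otimes(\lambda\otimes\bm{x}\oplus\mu\otimes\bm{x}')=\lambda\otimes\bm{u}\oplus\mu\otimes\bm{v}$, whose $i$th entry is $\max(\lambda+u_i,\mu+v_i)$. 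Hence, setting $a_i=u_i-y_i$ and $b_i=v_i-y_i$,
$$R_\infty(\lambda\otimes\bm{x}\oplus\mu\otimes\bm{x}')=\max_i\big|\max(\lambda+a_i,\mu+b_i)\big|.$$

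The next step is to reduce to a coordinatewise bound, namely that for each $i$
$$\big|\max(\lambda+a_i,\mu+b_i)\big|\le\max\big(\lambda+|a_i|,\,\mu+|b_i|\big).$$
Granting this, taking the maximum over $i$ and using $\max_i\max(s_i,t_i)=\max(\max_i s_i,\max_i t_i)$ yields $R_\infty(\lambda\otimes\bm{x}\oplus\mu\otimes\bm{x}')\le\max(\lambda+\|\bm{u}-\bm{y}\|_\infty,\,\mu+\|\bm{v}-\bm{y}\|_\infty)=\lambda\otimes R_\infty(\bm{x})\oplus\mu\otimes R_\infty(\bm{x}')$, which is precisely the defining inequality for max-plus convexity.

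To establish the coordinatewise bound I would drop the index and observe that the expression is symmetric under swapping $(\lambda,a)$ with $(\mu,b)$, so we may assume $\lambda=0$ and $\mu\le 0$. For the upper estimate, $a\le|a|$ and $\mu+b\le\mu+|b|$ give $\max(a,\mu+b)\le\max(|a|,\mu+|b|)$; for the lower estimate, $-\max(a,\mu+b)=\min(-a,-\mu-b)\le -a\le|a|\le\max(|a|,\mu+|b|)$. Combining the two gives $|\max(a,\mu+b)|\le\max(|a|,\mu+|b|)=\max(\lambda+|a|,\mu+|b|)$.

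The one step that needs genuine care --- and essentially the only obstacle in an otherwise routine argument --- is the bookkeeping when infinite entries appear: some $u_i$ or $v_i$ equal to $-\infty$ while $y_i$ is finite, or $\lambda=-\infty$. One fixes the conventions $-\infty+t=-\infty$ and $(-\infty)\otimes(+\infty)=-\infty$ and checks directly that the inequality survives in each degenerate case; for instance $\lambda=-\infty$ forces $\mu=0$, and then both sides collapse to $R_\infty(\bm{x}')$.
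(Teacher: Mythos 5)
Your argument is correct: max-plus linearity of $\bm{x}\mapsto A\otimes\bm{x}$ reduces everything to the scalar inequality $\lvert\max(\lambda+a,\mu+b)\rvert\le\max(\lambda+\lvert a\rvert,\mu+\lvert b\rvert)$ for $\max(\lambda,\mu)=0$, which you verify, and the $-\infty$ degeneracies you flag are exactly the only remaining bookkeeping. The paper itself offers no proof of Proposition~\ref{mpconvthm} (it is dismissed as ``straightforwards to prove''), so there is nothing to compare against; your route is the natural one and fills that gap.
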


We can compute an optimal solution for the max-plus $\infty$-norm regression problem as follows. For $A\in\Rmax^{n\times n}$ and $\bm{y}\in\Rmax^n$. Let 
\begin{equation}\label{INFSOL}
\hat{\bm{x}}=-\big(A^{\top}\otimes(-\bm{y})\big), \quad \bm{x}^{\ast}=\hat{\bm{x}}\otimes \alpha/2,
\end{equation}
where $\alpha=\|A\otimes \hat{\bm{x}}-\bm{y}\|_{\infty}$. The vector $\bm{x}^{\ast}$ can be computed with cost $\uc{O}(nd)$. 

\begin{theorem}[\cite{butk10}, Theorem 3.5.2]
Let $A\in\Rmax^{n\times d}$ and $\bm{y}\in\Rmax^n$, then 
\begin{equation}
\bm{x}^{\ast}=\sup_{\leq}\big(\arg\min_{\bm{x}\in\Rmax^d}\|A\otimes \bm{x}-\bm{y}\|_{\infty}\big).
\end{equation}
That is the supremum element of the optimal set with respect to the standard partial order $\leq$ on $\Rmax^n$.
\end{theorem}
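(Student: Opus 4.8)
The plan is to read $\hat{\bm x}$ through max-plus residuation and then to exploit the invariance of the $\infty$-norm residual under the symmetric scalar shift that turns $\hat{\bm x}$ into $\bm x^{\ast}$. Throughout I would assume the mild non-degeneracy conditions under which the formula \eqref{INFSOL} is finite (no identically $-\infty$ column of $A$, $\bm y$ finite, no identically $-\infty$ row of $A$); the remaining cases reduce to this one after deleting the trivial rows and columns.

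First I would record the Galois-type equivalence
\[
A\otimes\bm x\le \bm b \quad\Longleftrightarrow\quad \bm x\le -\bigl(A^{\top}\otimes(-\bm b)\bigr),
\]
valid for every $\bm b\in\Rmax^n$, which is immediate since $(A\otimes\bm x)_i\le b_i$ for all $i$ says exactly $x_j\le b_i-a_{ij}$ for all $i,j$, i.e.\ $x_j\le\min_i(b_i-a_{ij})=-(A^{\top}\otimes(-\bm b))_j$. Taking $\bm b=\bm y$ shows $\hat{\bm x}$ is the greatest $\bm x$ with $A\otimes\bm x\le\bm y$, so the residual $\bm y-A\otimes\hat{\bm x}$ is nonnegative in every component and its largest component equals $\alpha$. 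I would also note the scaling identity $-\bigl(A^{\top}\otimes(-(\bm b\otimes\gamma))\bigr)=\bigl(-(A^{\top}\otimes(-\bm b))\bigr)\otimes\gamma$ for a scalar $\gamma\in\R$, and that $\bm x\mapsto A\otimes\bm x$ is isotone with $A\otimes(\bm x\otimes\gamma)=(A\otimes\bm x)\otimes\gamma$.

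Next I would pin down the optimal value $\alpha/2$ from both sides. For the upper bound, $A\otimes\bm x^{\ast}=(A\otimes\hat{\bm x})\otimes(\alpha/2)$, so each component of $A\otimes\bm x^{\ast}-\bm y$ equals $(A\otimes\hat{\bm x})_i-y_i+\alpha/2\in[-\alpha/2,\alpha/2]$, using $0\le y_i-(A\otimes\hat{\bm x})_i\le\alpha$; hence $\|A\otimes\bm x^{\ast}-\bm y\|_{\infty}\le\alpha/2$. For the lower bound, let $\bm x\in\Rmax^d$ be arbitrary and put $\gamma=\|A\otimes\bm x-\bm y\|_{\infty}$; if $\gamma=+\infty$ there is nothing to prove, so assume $\gamma$ finite. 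Then $A\otimes\bm x\le\bm y\otimes\gamma$, so by the equivalence and scaling identity $\bm x\le\hat{\bm x}\otimes\gamma$, and applying $A\otimes(\cdot)$ gives $(A\otimes\hat{\bm x})_i+\gamma\ge(A\otimes\bm x)_i\ge y_i-\gamma$ for all $i$. Choosing $i$ with $y_i-(A\otimes\hat{\bm x})_i=\alpha$ forces $\alpha\le 2\gamma$, so $\|A\otimes\bm x-\bm y\|_{\infty}\ge\alpha/2$ for every $\bm x$. Together these show the minimum value is exactly $\alpha/2$ and that $\bm x^{\ast}$ attains it.

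Finally, if $\bm x$ is any minimiser then $\gamma=\alpha/2$ in the chain above, so $\bm x\le\hat{\bm x}\otimes(\alpha/2)=\bm x^{\ast}$; thus $\bm x^{\ast}$ dominates every element of $\arg\min_{\bm x\in\Rmax^d}\|A\otimes\bm x-\bm y\|_{\infty}$ and, being itself a minimiser, is its $\le$-supremum (indeed its maximum). I expect the only genuinely delicate points to be the precise form of the residuation equivalence and its compatibility with scalar shifts, together with the degenerate-entry bookkeeping flagged at the outset; everything else is routine.
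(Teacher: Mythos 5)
Your proof is correct: the Galois/residuation equivalence, the scaling identity, the two-sided bound pinning the optimal value at $\alpha/2$, and the final domination argument showing $\bm{x}^{\ast}$ is the maximum of the optimal set all check out, and the non-degeneracy bookkeeping you flag is exactly the right caveat. The paper itself gives no proof of this statement (it is quoted from Butkovi\v{c}, Theorem 3.5.2), and your argument is essentially the standard principal-solution proof underlying that cited result, so there is nothing to contrast.
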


\subsection{2-norm regression}\label{2normo}

We saw in Example~\ref{inf2egg} that the max-plus $2$-norm regression problem could support multiple isolated local minima and was therefore non-convex. Because these local minima do not form a single, path connected set, this example also shows that the problem is max-plus non-convex. In the remainder of this section we develop some supporting theory then present algorithms for approximately solving the max-plus $2$-norm regression problem. 

\medskip

For $A\in\Rmax^{n\times d}$ and $\bm{x}\in\Rmax^d$ define the \emph{pattern of support} $\pattern(\bm{x})=(P_{1},\dots,P_{n}) \in \uc{P}\big(\{1,\dots,d\}\big)^n$, by
\begin{equation}
 j\in P_{i} \quad \Leftrightarrow \quad  a_{ij}+\bm{x}_{j}=(A\otimes \bm{x})_{j},
\end{equation}
for $i=1,\dots,n$. Define the \emph{domain} of a pattern $P\in \uc{P}\big(\{1,\dots,d\}\big)^n$, by 
\begin{equation}
X(P)=\{\bm{x}\in\Rmax^d~:~\pattern(\bm{x})=P\}.
\end{equation}
We say that a pattern $P$ is \emph{feasible}, if $X(P)\cap \R^d \neq \emptyset$. For a pattern $P$ define the binary relation $\tilde{\bowtie}_P$ on $\{1,\dots,d\}$, by $j\tilde{\bowtie}_P k$, if and only $j,k\in P_{i}$, for some $i=1,\dots,n$. Let $\bowtie_{P}$ denote the transitive closure of $\tilde{\bowtie}_P$ and let $|\bowtie_{P}|$ be the number of equivalence classes of $\bowtie_{P}$. If $P$ is a feasible pattern then then $X(P)$ is a set of dimension $|\bowtie_{P}|$.  From \cite[Cor.~25]{73810} we have that
\begin{equation}\label{numer}
|\{\hbox{feasible $P$} ~:~ |\bowtie_{P}|=k\}|\leq \frac{(n+d-k-1)!}{(n-k)!\cdot (d-k)! \cdot (k-1)!},
\end{equation}
for $k=1,\dots,d$, with equality for all $k$ in the generic case of a matrix $A$ with rows/cols in general position. 

Define the ordering $\preceq$ on $\uc{P}\big(\{1,\dots,d\}\big)^n$, by $P\preceq P'$, if and only if $P_{i}\subseteq P_{i}'$, for all $i=1,\dots,n$, with a strict inequality if at least one inclusion is a strict inclusion. Then the boundary of the domain $X(P)$ is given by $\cup_{\{P'~:~P\prec P' \}}X(P')$ and the closure by $\Cl\big(X(P)\big)=\cup_{\{P'~:~P\preceq P' \}}X(P')$.

%\left(\begin{array}{c}n+d-k-2 \\ n-k-1,d-k-1,k \end{array}\right)

Also define the \emph{feasibility matrix} by $F_{P} \in\Rmax^{d\times d}$, by
\begin{equation}\label{feasibilitymatrixdef}
f_{jk}=\left\{\begin{array}{cc} 0, & \hbox{for $j=k$,} \\ \max\big\{-\infty,\max \{a_{ik}-a_{ij}~:~ j\in P_{i}\}\big\},   & \hbox{otherwise.}\end{array}\right.
\end{equation}

\medskip

We will need to quickly review some related results to support the following Theorem. For a max-plus matrix $B\in \Rmax^{d\times d}$, the \emph{maximum cycle mean} of $B$ is defined by
\begin{equation}
\lambda(B)=\max_{\zeta}\frac{W(\zeta)}{L(\zeta)},
\end{equation}
where the maximum is taken over cycles $\zeta=\big(\zeta(1)\mapsto \dots \mapsto \zeta(k)\mapsto \zeta(1) \big)\subset \{1,\dots,d\}$. The \emph{weight} of a cycle is the sum of its edge weights $W(\zeta)=b_{\zeta(1)\zeta(2)}+\dots+b_{\zeta(k-1)\zeta(k)}+b_{\zeta(k)\zeta(1)}$ and the \emph{length} of a cycle is its total number of edges $L(\zeta)=k+1$. The \emph{Klene star} of $B\in \Rmax^d$ is defined by 
\begin{equation}
B^{\star}=\lim_{t \rightarrow \infty}\big(I\oplus B \oplus B^{\otimes 2}\oplus \dots \oplus B^{\otimes t}\big),
\end{equation}
where $I\in\Rmax^d$ is the max-plus identity matrix, with zeros on the diagonal and minus infinities off of the diagonal. From \cite[Prop.~1.6.10 and Thm.~1.6.18]{butk10} we have that if $\lambda(B)\leq0$, then $B^{\star}$ exists and $\{x\in\Rmax^d~:~ B\otimes x=x\} =\col(B^{\star})$ and that if $\lambda(B)>0$, then $B^{\star}$ does not exist and $\{x\in\Rmax^d~:~ B\otimes x=x\}\cap\R^d=\emptyset$. 

For $B\in\Rmax^{d\times d}$ define $\overline{B}\in\Rmax^{d}$ to be the arithmetic mean of the rows of $B$. It follows from \cite[Thm.~3.3]{ssb09} that $\overline{(B^{\star})}\in\relint\big(\col(B^{\star})\big).$

\begin{theorem}\label{feasibilitythm}
For $A\in\Rmax^{n\times d}$ and $P\in \uc{P}\big(\{1,\dots,d\}\big)^n$ we have
$$
\Cl\big(X(P)\big)=\{\bm{x}\in\Rmax^d~:~ F_{P}\otimes \bm{x}=\bm{x}\}.
$$
Moreover, the pattern $P$ is feasible, if and only if $\lambda(F_{P})=0$ and in the case where $P$ is feasible, we have
$$
X(P)=\relint \big( \col (F_{P}^{\star})\big)
$$
and $\overline{(F_{P}^{\star})}\in X(P)$.
\end{theorem}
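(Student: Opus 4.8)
I would establish the two assertions in turn: the first by a direct computation with the feasibility matrix, the second by combining the cited fixed-point theory with the stratification of $X(P)$ recorded above. For the identity $\Cl(X(P))=\{\bm{x}:F_{P}\otimes\bm{x}=\bm{x}\}$, note first that since $f_{jj}=0$ we have $(F_{P}\otimes\bm{x})_{j}\geq f_{jj}+x_{j}=x_{j}$ for every $j$, so $F_{P}\otimes\bm{x}=\bm{x}$ holds precisely when $f_{jk}+x_{k}\leq x_{j}$ for all $j,k$. Substituting the definition of $f_{jk}$, this system of inequalities says exactly that $a_{ik}+x_{k}\leq a_{ij}+x_{j}$ for every $k$ whenever $j\in P_{i}$; equivalently, $j\in P_{i}$ implies $a_{ij}+x_{j}=(A\otimes\bm{x})_{i}$, that is $P\preceq\pattern(\bm{x})$. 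Hence $\{\bm{x}:F_{P}\otimes\bm{x}=\bm{x}\}=\bigcup_{P'\succeq P}X(P')$, and the right-hand side equals $\Cl(X(P))$ by the closure description recorded above.

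For the feasibility criterion, observe that the loops furnish cycles of weight $f_{jj}=0$, so $\lambda(F_{P})\geq 0$ and either $\lambda(F_{P})=0$ or $\lambda(F_{P})>0$. If $\lambda(F_{P})>0$, then \cite[Thm.~1.6.18]{butk10} gives $\{\bm{x}:F_{P}\otimes\bm{x}=\bm{x}\}\cap\R^{d}=\emptyset$, hence $X(P)\subseteq\Cl(X(P))$ contains no real vector and $P$ is infeasible. If $\lambda(F_{P})=0$, then by \cite[Prop.~1.6.10]{butk10} the Kleene star $F_{P}^{\star}$ exists and $\Cl(X(P))=\col(F_{P}^{\star})$; the vector $F_{P}^{\star}\otimes\bm{0}$ lies in this set and is finite because $F_{P}^{\star}$ has zeros on its diagonal, and since $X(P)$ is dense in its closure while $\R^{d}$ is open in $\Rmax^{d}$ it follows that $X(P)$ also contains a real vector, so $P$ is feasible. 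Thus $P$ is feasible if and only if $\lambda(F_{P})=0$.

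It remains, for feasible $P$, to identify $X(P)$ with $\relint(\col(F_{P}^{\star}))$ and to locate $\overline{(F_{P}^{\star})}$ inside it. Here I would invoke the stratification again: $\col(F_{P}^{\star})=\Cl(X(P))$ is the disjoint union of $X(P)$ and the strata $X(P')$ with $P'\succ P$, whose union is the topological boundary of $X(P)$; since $\col(F_{P}^{\star})$ is a finitely generated max-plus cone, hence max-plus convex and path-connected, and $X(P)$ is its relatively open, dense stratum, one concludes $X(P)=\relint(\col(F_{P}^{\star}))$. The claim $\overline{(F_{P}^{\star})}\in X(P)$ is then immediate from \cite[Thm.~3.3]{ssb09}, which places $\overline{(F_{P}^{\star})}$ in $\relint(\col(F_{P}^{\star}))$. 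I expect the identification $X(P)=\relint(\col(F_{P}^{\star}))$ to be the main obstacle: one must reconcile the combinatorial ``closure with all strictly larger patterns deleted'' description of $X(P)$ with the convex-geometric notion of relative interior used in \cite{ssb09}, taking care over degenerate patterns; the remaining steps are then routine bookkeeping.
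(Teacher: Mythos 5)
Your argument follows the paper's proof essentially step for step: the same chain of equivalences (exploiting the zero diagonal of $F_{P}$ to pass from $F_{P}\otimes\bm{x}\leq\bm{x}$ to $F_{P}\otimes\bm{x}=\bm{x}$) identifying $\Cl\big(X(P)\big)$ with the fixed-point set, the same appeal to \cite[Prop.~1.6.10 and Thm.~1.6.18]{butk10} for the feasibility criterion and to \cite[Thm.~3.3]{ssb09} for the final claim, and the same ``closure minus its boundary strata'' identification of $X(P)$ with $\relint\big(\col(F_{P}^{\star})\big)$. If anything you are slightly more explicit than the paper at the one delicate point --- passing from $\Cl\big(X(P)\big)\cap\R^{d}\neq\emptyset$ to $X(P)\cap\R^{d}\neq\emptyset$ --- though both arguments ultimately rest on the combinatorial closure/boundary description of $X(P)$ stated without proof before the theorem, which is exactly where the degenerate cases you worry about would have to be handled.
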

\begin{proof}
First note that $\bm{x}\in \Cl\big(X(P)\big)$, if and only if $(A\otimes \bm{x})_{i}=a_{ij}+\bm{x}_{j}$, for all $j\in P_{i}$, for all $i=1,\dots,n$, which is equivalent to $\max_{k=1}^{d}(a_{ik}+\bm{x}_{k})\leq a_{ij}+\bm{x}_{j}$, for all $j\in P_{i}$, for all $i=1,\dots,n$, which is equivalent to $F_{P}\otimes \bm{x}\leq \bm{x}$ and since $F_{P}$ has zeros on its diagonal this is equivalent to $F_{P}\otimes \bm{x}=\bm{x}$. Next from \cite[Prop.~1.6.10 and Thm.~1.6.18]{butk10}, we have that $\Cl\big(X(P)\big)\cap \R^d$ is non-empty, if and only if $\lambda(F_{P})\leq 0$ and since $F_{P}$ has zeros on the diagonal this is equivalent to the condition $\lambda(F_{P})=0$. In the case that $\lambda(F_{P})=0$ we also have $ \Cl\big(X(P)\big)=\col(F_{P}^{\star})$. Then note that
$$
X(P)=\cup_{\{P'~:~P\preceq P'\}}X(P')\big/ \cup_{\{P'~:~P\prec P'\}}X(P')
$$
is equal to $\col(F_{P}^{\star})$ minus its boundary, which is precisely $\relint \big( \col (F_{P}^{\star})\big)$. The final result follows immediately from  \cite[Thm.~3.3]{ssb09}. 
\end{proof}
%\begin{proof}
%For $x\in \Rmax^d$, it is easy to show that $x\in X(P)$ is equivalent to $F_{P}\otimes x=x$. With this observation the result follows as a corollary of \cite[Prop.~1.6.10]{butk10}.
%\end{proof}
%\begin{proof}
%For $x\in \Rmax^d$, it is easy to show that $x\in X(P)$ is equivalent to $F_{P}\otimes x=x$. Therefore $P$ is feasible if and only if $\lambda(F_{P})\leq 0$ and since $F_{P}$ has zeros on the diagonal we must have $\lambda(F_{P})\geq 0$, so the necessary and sufficient condition becomes $\lambda(F_{P})=0$. For the second result, assuming that $F_{P}^{\ast}$ exists, $F_{P}\otimes x=x$ implies $F_{P}^{\star}\otimes x=x$, so that $x\in\col(F_{P}^{\star})$. Conversely if $x\in\col(F_{P}^{\star})$, then $x=F_{P}^{\star}\otimes x'$ for some $x'\in\Rmax^d$ and since $F_{P}$ has zeros on the diagonal, we have $F_{P}=F_{P}\oplus I$, so that
%$$
%F_{P}\otimes x=(F_{P}\oplus I)\otimes F_{P}^{\star}\otimes x'=F_{P}^{\star}\otimes x'=x.
%$$
%\end{proof}

If there are $m$ equivalence classes in $\bowtie_{P}$ then label them arbitrarily with $\{1,\dots,m\}$ and define $c:\{1,\dots,d\}\mapsto\{1,\dots,m\}$, such that $c(j)=k$, if and only if $j$ is in the $k$th equivalence class. Now define $C\in\R ^{d\times m}$, by 
\begin{equation}
c_{jc(j)}=\frac{1}{\sqrt{|\{k~:~c(k)=c(j)\}|}}, 
\end{equation}
for $j=1,\dots,d$ and all other entries equal to zero. Then for any $\bm{x}_{P}\in \Cl\big(X(P)\big)$, we have that
\begin{equation}
\uc{A}\big(X(P)\big)=\{C\bm{h}+\bm{x}_{P}~:~\bm{h}\in\Rmax^m\}
\end{equation}
is the smallest affine subspace of $\Rmax^d$ containing $X(P)$. We could choose $\bm{x}_{P}=\overline{(F^{\star}_{P})}$ but also need to consider the case where $\bm{x}_{P}$ represents the current state of one of the algorithms that we detail later. We call $\uc{A}\big(X(P)\big)$ the \emph{extended domain} of $P$. Now define the \emph{subpattern} $\ell\in\{1,\dots,d\}^{n}$ of $P$, by $\ell(i)=\min(P_{i})$, for $i=1,\dots,n$ and define $L\in\{0,1\}^{n\times d}$, by $l_{i\ell(i)}=1$ and all other entries equal to zero. Then define the \emph{local mapping} $A_{P}:\Rmax^{d} \mapsto \Rmax^n$, by
\begin{equation}
A_{P}(\bm{x})=L\bm{x}+\bm{a}_{P},
\end{equation}
where $(\bm{a}_{P})_{i}=a_{i\ell(i)}$, for $i=1,\dots,n$. Note that $A_{P}(\bm{x})=A\otimes\bm{x}$, for all $\bm{x}\in \Cl\big(X(P)\big)$. Define the \emph{image} $Y(P)=A_{P}\big(X(P)\big)$. Then we have 
\begin{equation}
\col(A)=\bigcup_{P}Y(P),
\end{equation}
where the union is taken over all feasible patterns.  Also define the \emph{extended image} 
\begin{equation}
\uc{A}\big(Y(P)\big)=A_{P}\Big(\uc{A}\big(X(P)\big)\Big). 
\end{equation}
Note that the extended image is the smallest affine subspace containing the image and that we have 
\begin{equation}
\uc{A}\big(Y(P)\big)=\{LC\bm{h}+L\bm{x}_{P}+\bm{a}_{P}:~\bm{h}\in\Rmax^m\}.
\end{equation}
For a feasible pattern $P$ define the \emph{normal projection} map $\Phi(P,\cdot):\Rmax^n\mapsto\Rmax^n$, by
\begin{equation}
\Phi(P,\bm{y})=\arg\min\{\|\bm{y}-\bm{y}'\|_{2}~:~\bm{y}'\in\uc{A}\big(Y(P)\big)\}.
\end{equation}
Then we have 
\begin{equation}
\Phi(P,\bm{y})=LC\bm{h}^{\ast}+L\bm{x}_{P}+\bm{a}_{P},
\end{equation} 
where
\begin{equation}
\bm{h}^{\ast}=\big((LC)^{\top}LC\big)^{\dagger}(LC)^{\top}(\bm{y}-L\bm{x}_{P}-\bm{a}_{P}).
\end{equation}
Note that $LC\in\R^{n\times m}$, with 
\begin{equation}
(LC)_{i~c\circ \ell(i)}=\frac{1}{\sqrt{|\{j~:~c(j)=c\circ \ell(i)\}|}}
\end{equation}
and all other entries equal to zero. Hence we have that
\begin{equation}\label{iver}
\bm{h}^{\ast}_{k}=\sqrt{{|\{j~:~c(j)=k \}|}}~\overline{\{(\bm{y}-L\bm{x}_{P}-\bm{a}_{P})_{i} ~:~ c\circ \ell(i)=k\}},
\end{equation}
if $\{i~:~c\circ \ell(i)=k\}\neq\emptyset$ and $\bm{h}^{\ast}_{k}=0$, otherwise and where the overline in \eqref{iver} indicates taking the mean. Define the equivalence relation $\hat{\bowtie}_{P}$ on $\{1,\dots,n\}$, by $i\hat{\bowtie}_{P}i'$, if and only if $\ell(i)\bowtie_{P} \ell(i')$, then we have
\begin{equation}
\Phi(P,\bm{y})_{i}=\overline{\{(\bm{y}-L\bm{x}_{P}-\bm{a}_{P})_{i'}~:~ i\hat{\bowtie}_{P} i'\}}+(L\bm{x}_{P}+\bm{a}_{P})_{i},
\end{equation}
for $i=1,\dots,n$. Also define 
\begin{align}
A^{-1}_{P}\big(\Phi(P,\bm{y})\big) &=\{\bm{x}\in\uc{A}\big(X(P)\big)~:~A_{P}(\bm{x})=\phi(P,\bm{y})\} \\ &=C\bm{h}^{\ast}+C\ker(LC)+\bm{x}_{P},
\end{align}
where 
\begin{equation}
(C\bm{h}^{\ast})_{j}=\overline{\{(\bm{y}-L\bm{x}_{P}-\bm{a}_{P})_{i}~:~ c\circ p(i)=c(j)\}},
\end{equation}
for $j=1,\dots,d$ and
\begin{equation}
C~\hbox{ker}(LC)=\hbox{span}\{\underline{e}_{j}\in\Rmax^d~:~ j\in \{1,\dots,d\}/\support(P) \},
\end{equation}
where $\support(P)\subset\{1,\dots,d\}$ is the \emph{support} of $P$, defined by $\support(P)=\cup_{i=1}^{n}P_{i}$. We say that $\Phi(P,y)$ is \emph{admissible}, if $\Phi(P,\bm{y})\in \Cl\big(Y(P)\big)$, or equivalently, if $A^{-1}_{P}\big(\Phi(P,\bm{y})\big)\cap \Cl\big(X(P)\big)\neq \emptyset$. Also define the \emph{closest local minimum} map $\Psi(P,\bm{y},\cdot):\Rmax^d\mapsto\Rmax^d$, by 
\begin{equation}
\Psi(P,\bm{y},\bm{x})=\arg\min\{\|\bm{x}-\bm{x}'\|_{2}~:~\bm{x}'\in A^{-1}_{P}\big(\Phi(P,\bm{y})\big)\},
\end{equation}
which is given by $\Psi(P,\bm{y},\bm{x})_{j}=(C\bm{h}^{\ast})_{j}$, for $j\in\support(P)$ and $\Psi(P,\bm{y},\bm{x})_{j}={x}_{j}$, otherwise. 

\begin{theorem}\label{admissthm}For $A\in\Rmax^{n\times d}$, $\bm{y}\in\Rmax^n$ and a feasible pattern $P\in \uc{P}\big(\{1,\dots,d\}\big)^n$ the normal projection $\Phi(P,\bm{y})$ is admissible, if and only if 
\begin{equation}\label{thmcondition}
F_{P}\otimes \Psi(P,\bm{y},\underline{-\infty})= \Psi(P,\bm{y},\underline{-\infty}),
\end{equation}
where $\underline{-\infty}\in\Rmax^d$ is a vector with all entries equal to $-\infty$.
\end{theorem}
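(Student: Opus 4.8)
The plan is to turn admissibility into a fixed-point question on the fibre $A^{-1}_{P}\big(\Phi(P,\bm{y})\big)$ and then to use that this fibre is free in exactly the coordinates outside $\support(P)$. By the definition of admissibility, $\Phi(P,\bm{y})$ is admissible precisely when $A^{-1}_{P}\big(\Phi(P,\bm{y})\big)\cap\Cl\big(X(P)\big)\neq\emptyset$, and Theorem~\ref{feasibilitythm} identifies $\Cl\big(X(P)\big)$ with $\{\bm{x}\in\Rmax^d~:~F_{P}\otimes\bm{x}=\bm{x}\}$. So the first step is to rewrite admissibility as: there exists $\bm{x}\in A^{-1}_{P}\big(\Phi(P,\bm{y})\big)$ with $F_{P}\otimes\bm{x}=\bm{x}$. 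With this in hand the implication $(\Leftarrow)$ of the theorem is immediate, since $\bm{x}^{\ast}:=\Psi(P,\bm{y},\underline{-\infty})$ lies in $A^{-1}_{P}\big(\Phi(P,\bm{y})\big)$ by construction; so all the content is in showing that, conversely, the existence of \emph{some} fixed point $\bm{x}$ in the fibre forces $\bm{x}^{\ast}$ itself to be a fixed point of $F_{P}$.

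The key structural input is that $A^{-1}_{P}\big(\Phi(P,\bm{y})\big)=C\bm{h}^{\ast}+C\,\ker(LC)+\bm{x}_{P}$ varies only in the coordinates $\{1,\dots,d\}\setminus\support(P)$, so every element of the fibre has the same restriction to $\support(P)$, while $\bm{x}^{\ast}$ is the member obtained by filling the remaining coordinates with $-\infty$; in particular $\bm{x}^{\ast}\leq\bm{x}$ componentwise and $\bm{x}^{\ast}_{k}=\bm{x}_{k}$ for all $k\in\support(P)$. Given a fixed point $\bm{x}$ of $F_{P}$ in the fibre, I would then verify $F_{P}\otimes\bm{x}^{\ast}=\bm{x}^{\ast}$ one coordinate at a time. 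The inequality $F_{P}\otimes\bm{x}^{\ast}\geq\bm{x}^{\ast}$ holds automatically because $F_{P}$ has zeros on its diagonal, so only $F_{P}\otimes\bm{x}^{\ast}\leq\bm{x}^{\ast}$ needs argument. For $j\notin\support(P)$ one has $f_{jk}=-\infty$ for all $k\neq j$ by \eqref{feasibilitymatrixdef}, whence $(F_{P}\otimes\bm{x}^{\ast})_{j}=\bm{x}^{\ast}_{j}$ directly. For $j\in\support(P)$ the terms with $k\notin\support(P)$ contribute $-\infty$ since $\bm{x}^{\ast}_{k}=-\infty$, so $(F_{P}\otimes\bm{x}^{\ast})_{j}=\max_{k\in\support(P)}(f_{jk}+\bm{x}_{k})\leq\max_{k=1}^{d}(f_{jk}+\bm{x}_{k})=(F_{P}\otimes\bm{x})_{j}=\bm{x}_{j}=\bm{x}^{\ast}_{j}$, where the last equalities use that $\bm{x}$ is a fixed point and that $\bm{x}^{\ast}$ agrees with $\bm{x}$ on $\support(P)$.

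I expect the only subtlety to be bookkeeping rather than a genuine obstacle: one must be careful with the $-\infty$ entries, both those placed in $\bm{x}^{\ast}$ off $\support(P)$ and those appearing in the rows of $F_{P}$ indexed outside $\support(P)$, and one should confirm that $\bm{x}^{\ast}=\Psi(P,\bm{y},\underline{-\infty})$ is a bona fide point of the fibre $A^{-1}_{P}\big(\Phi(P,\bm{y})\big)$ inside $\Rmax^{d}$ (this is exactly what the $\Psi$ formula asserts and what makes the $(\Leftarrow)$ step legitimate). Beyond that the argument is just the routine unwinding of the definitions of $\Phi$, $\Psi$, $A_{P}$, $F_{P}$ and $\support(P)$ recorded above, together with the single appeal to Theorem~\ref{feasibilitythm}.
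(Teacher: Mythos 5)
Your proposal is correct and follows essentially the same route as the paper's proof: the backward implication via Theorem~\ref{feasibilitythm} and the fact that $\Psi(P,\bm{y},\underline{-\infty})$ lies in the fibre, and the forward implication by comparing $\Psi(P,\bm{y},\underline{-\infty})$ componentwise with an arbitrary fixed point $\bm{x}$ in the fibre, treating the coordinates in and outside $\support(P)$ separately and using the zero diagonal of $F_{P}$ for the reverse inequality. Your explicit expansion of the max over $k\in\support(P)$ is just a spelled-out version of the paper's appeal to monotonicity of $\bm{x}'\mapsto F_{P}\otimes\bm{x}'$.
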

\begin{proof}
If \eqref{thmcondition} holds then from Theorem~\ref{feasibilitythm} we have $\Psi(P,\bm{y},\underline{-\infty})\in\Cl\big(X(P)\big)$ and therefore $\Phi(P,\bm{y})$ is admissible. Conversely suppose that $\Psi(P,\bm{y})$ is admissible, then there exists $\bm{x}\in A^{-1}_{P}\big(\Phi(P,\bm{y})\big)$ such that $F_{P}\otimes \bm{x}=\bm{x}$. Note that for $j\in\{1,\dots,d\}/\support(P)$ we have $(F_{P}\otimes \bm{x}')_{j}=x_{j}'$, for all $\bm{x}'\in\Rmax^d$. Also note that for $k\in\support(P)$ we have that $(F_{P}\otimes \bm{x}')_{k}$ is non-decreasing in $x_{j}'$, for all $j=1,\dots,d$. Therefore 
$$
\big(F_{P}\otimes \Psi(P,\bm{y},\underline{-\infty})\big)_{k} \leq \big(F_{P}\otimes \bm{x}\big)_{k}\leq x_{k}=\Psi(P,\bm{y},\underline{-\infty})_{k},
$$
for $k\in\support(P)$ and
$$
\big(F_{P}\otimes \Psi(P,\bm{y},\underline{-\infty})\big)_{j}=\Psi(P,\bm{y},\underline{-\infty})_{j},
$$
for $j\in\{1,\dots,d\}/\support(P)$. Therefore \eqref{thmcondition} holds. 

\end{proof}

For $A\in\Rmax^{n\times d}$ and $\bm{y}\in\Rmax^n$ define the \emph{squared residual} $R:\Rmax^d\mapsto\mathbb{R}_{+}$, by $R(\bm{x})=\|A\otimes\bm{x} -\bm{y}\|_{2}^{2}/2$. For a feasible pattern $P$ define the \emph{local squared residual} $R_{P}:\uc{A}\big(X(P)\big)\mapsto \mathbb{R}$, by 
\begin{equation}
R_{P}(\bm{x})=\|A_{P}(\bm{x})-\bm{y}\|_{2}^2/2=\|L\bm{x}+\bm{a}_{P}-\bm{y}\|_{2}^2/2.
\end{equation}
Note that for $\bm{x}\in\Cl\big(X(P)\big)$, we have $R(\bm{x})=R_{P}(\bm{x})$. Hence $R$ is piecewise quadratic.

\begin{example}\label{2negg}

Consider 
$$
A=\left[\begin{array}{cc} 0 & 0 \\ 1 & 0 \\ 0 & 1 \end{array}\right], \quad \bm{y}=\left[\begin{array}{cc} 0  \\ 0.5  \\ 0 \end{array}\right], \quad \bm{y}'=\left[\begin{array}{cc} 0  \\ 1.5  \\ 2 \end{array}\right].
$$
There are seven feasible patterns, their domains are displayed in Figure~\ref{Pdomains}.

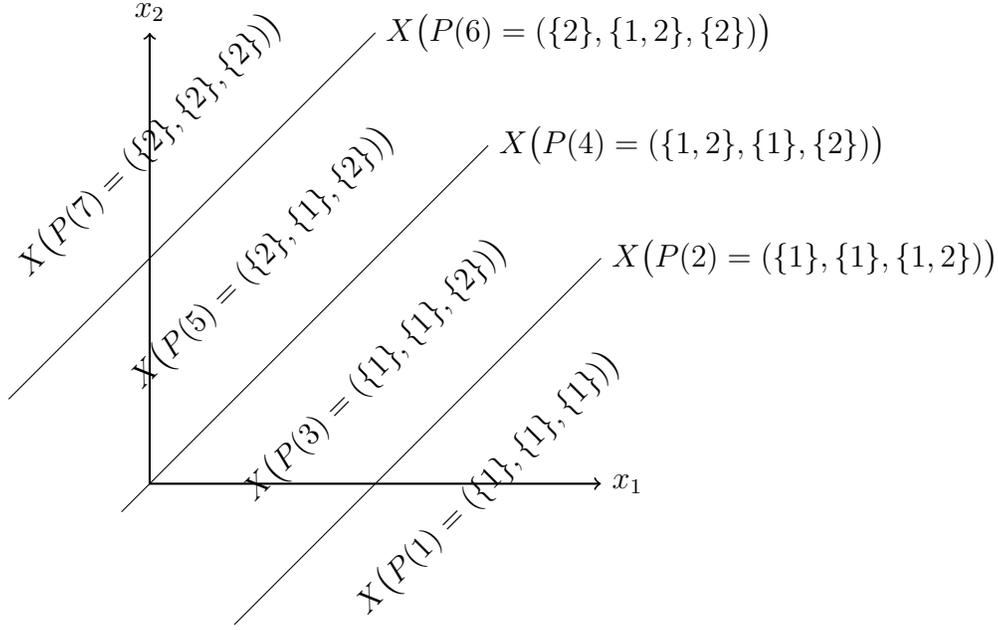
\begin{figure}
\begin{center}
\begin{tikzpicture}[scale=3]
    % Draw axes
    \draw [<->,thick] (0,2) node (yaxis) [above] {$x_2$}
        |- (2,0) node (xaxis) [right] {$x_1$};
    % Draw two intersecting lines
    
    \draw (1.5,0) node [rotate=45] {$X\big(P(1)=(\{1\},\{1\},\{1\})\big)$};
    
    \draw (0.5-0.125,-0.5-0.125)--(2,1) node [right] {$X\big(P(2)=(\{1\},\{1\},\{1,2\})\big)$};
    
    \draw (1,0.5) node [rotate=45] {$X\big(P(3)=(\{1\},\{1\},\{2\})\big)$};
    
    \draw (-0.125,-0.125)--(1.5,1.5) node [right]{$X\big(P(4)=(\{1,2\},\{1\},\{2\})\big)$};
    
    \draw (0.5,1) node [rotate=45] {$X\big(P(5)=(\{2\},\{1\},\{2\})\big)$};
    
    \draw (-0.5-0.125,0.5-0.125)--(1,2) node [right] {$X\big(P(6)=(\{2\},\{1,2\},\{2\})\big)$};
    
     \draw (0,1.5) node [rotate=45] {$X\big(P(7)=(\{2\},\{2\},\{2\})\big)$};
    
   % \shade[ lower left  = blue ,right  = olive!10, middle =olive]  (0.5-0.125,-0.5-0.125)--(2,1)--(1.5,1.5)--(-0.125,-0.125)--cycle;
    
  \end{tikzpicture}
  \end{center}
\caption{Domains of feasible patterns for the matrix $A$ of Example~\ref{2negg}.}\label{Pdomains}
\end{figure}

For $P=P(3)=\big(\{1\},\{1\},\{2\}\big)$, we have
$$
F_{P}=\left[\begin{array}{cc} 0 & 0 \\ -1 & 0\end{array}\right].
$$
Since $\lambda(F_{P})=0$, we have that $P$ is admissible and its domain is given by $X\big(P\big)=\relint\big(\col(F_{P}^{\star})\big)$. In this case $F_{P}^{\star}=F_{P}$ and 
$$
\col(F_{P}^{\star})=\{\bm{x}\in\Rmax^2~:~ x_{2}+1\geq x_{1}\geq x_{2} \}, \quad \relint\big(\col(F_{P}^{\star})\big)=\{\bm{x}\in\Rmax^2~:~ x_{2}+1> x_{1}> x_{2}\}.
$$
The boundary of $X(P)$ is given by $X\big(P(2)\big)\cup X\big(P(4)\big)$. These patterns are both feasible and
\begin{align*}
F_{P(2)}=\left[\begin{array}{cc} 0 & 1 \\ -1 & 0\end{array}\right], \quad &X\big(P(2)\big) = \relint\big(\col(F_{P(2)}^{\star})\big)=\col(F_{P(2)}^{\ast})=\{\bm{x}\in\Rmax^2~:~ x_{1}= x_{2}+1\}, \\ F_{P(4)}=\left[\begin{array}{cc} 0 & 0 \\ 0 & 0\end{array}\right], \quad &X\big(P(4)\big) = \relint\big(\col(F_{P(4)}^{\star})\big)=\col(F_{P(4)}^{\ast})=\{\bm{x}\in\Rmax^2~:~ x_{1}= x_{2}\}.
\end{align*}

\medskip

For $P=\big(\{1,2\},\{2\},\{1\}\big)$, we have
$$
F_{P}=\left[\begin{array}{cc} 0 & 1 \\ 1 & 0\end{array}\right].
$$
Since $\lambda(F_{P})=1$, we have that $P$ is not admissible. 

\medskip

Note that $\bm{z}\in\col(A)$, if and only if $\bm{z}\otimes \alpha=[z_{1}+\alpha,z_{2}+\alpha,z_{3}+\alpha]^{\top}\in\col(A)$, for all $\alpha\in\Rmax$. Therefore $\col(A)$ has translational symmetry in the $[1,1,1]^{\top}$ direction. Similarly for all of the pattern images and extended images. We can therefore study these objects by examining their image under the orthogonal projection $\Pi:\Rmax^3\mapsto\{\bm{z}\in\Rmax^3 ~:~[1,1,1]\bm{z}=0\}$. This is the same idea as in the tropical projected space $\mathbb{TP}^2$, which is usually taken to be a projection onto $\{\bm{y}\in\Rmax^3 ~:~[1,0,0]\bm{y}=0\}$. However the choice of projection we use here is  more convenient for analyzing the $2$-norm regression problem. The projected pattern images,  extended images and a sample of normal projections are displayed in Figure~\ref{Pimages}.

\medskip

Returning our attention to the pattern $P=P(3)=\big(\{1\},\{1\},\{2\}\big)$, the equivalence relation $\bowtie_{P}$ is the identity relation, so that $C$ is the $2\times 2$ identity matrix and the extended domain is given by
$$
\uc{A}\big(X(P)\big)=\{C\bm{h}+\bm{x}_{P}~:~ \bm{h}\in \Rmax^2\}=\Rmax^2.
$$
We have $p=(1,1,2)$, so the local map is given by $A_{P}(\bm{x})=[x_{1},x_{1}+1,x_{2}+1]^{\top}$. The extended image is given by
$$
\uc{A}\big(Y(P)\big)=\{\bm{z}\in\Rmax^3~:~ z_{2}=z_{1}+1 \},
$$
and the image is given by
$$
Y(P)=\{\bm{z}\in\Rmax^3~:~ z_{3}>z_{1}>z_{3}-1 \}.
$$
The boundary of the image is given by $Y\big(P(2)\big)\cup Y\big(P(4)\big)$, where
$$
Y\big(P(2)\big)=\{\bm{z}\in\Rmax^3~:~ z_{1}=z_{3}-1 \}, \quad Y\big(P(2)\big)=\{\bm{z}\in\Rmax^3~:~ z_{1}=z_{3} \}.
$$

\medskip

The equivalence relation $\hat{\bowtie}_{P}$ has equivalence classes $\{1,2\}$ and $\{3\}$. Using $\bm{x}_{P}=[0,0]^{\top}$, we obtain $\Phi(P,\bm{y})=[-0.25,0.75,0]^{\top}$. We have $\support(P)=\{1,2\}$, so that $\ker(LC)=\{0\}$ and therefore 
$$
A_{P}^{-1}\big(\Phi(P,\bm{y})\big)=C\bm{h}^{\ast}=[-0.25,-1]^{\top}.
$$
Similarly we must have $\Psi(P,\bm{y},\underline{-\infty})=[-0.25,-1]^{\top}$. Checking
$$
F_{P}\otimes \Psi(P,\bm{y},\underline{-\infty})=\left[\begin{array}{cc} 0 & 0 \\ -1 & 0\end{array}\right]\otimes\left[\begin{array}{cc} -0.25 \\ -1 \end{array}\right]=\left[\begin{array}{cc} -0.25 \\ -1 \end{array}\right]= \Psi(P,\bm{y},\underline{-\infty}),
$$
we show that $\Phi(P,\bm{y})$ is admissible. 

For the alternative target vector $\bm{y}'$ we have  $\Phi(P,\bm{y}')=[0.25,1.25,2]^{\top}$,
$$
A_{P}^{-1}\big(\Phi(P,\bm{y})\big)=C\bm{h}^{\ast}=[0.25,1]^{\top}
$$
and 
$$
F_{P}\otimes \Psi(P,\bm{y},\underline{-\infty})=\left[\begin{array}{cc} 0 & 0 \\ -1 & 0\end{array}\right]\otimes\left[\begin{array}{cc} 0.25 \\ 1 \end{array}\right]=\left[\begin{array}{cc} 1 \\ 1 \end{array}\right]> \Psi(P,\bm{y},\underline{-\infty}),
$$
shows that $\Phi(P,\bm{y}')$ is not admissible. 

\begin{figure}
\begin{center}
\begin{tikzpicture}[scale=4]
    % Draw axes
    \draw [<->,thick] (0,1) node (yaxis) [above] {$(-x_2+x_{3})/\sqrt{2}$}
        |- (1,0) node (xaxis) [below] {$(-2 x_1+x_{2}+x_{3})/\sqrt{6}$};
    % Draw two intersecting lines

    \draw[thick,fill] (-0.7071,0.4082) circle [radius=0.02cm] node [below right] {$Y_{1/2}$};
   
    \draw (-0.7071,0.4082)--(0,0.8165)[line width=2];

	\draw ( -0.3535, 0.6123)--( -0.3535, 0.6123) node [below right] {$Y_3$};
   
    \draw[thick,fill] (0,0.8165) circle [radius=0.02cm] node [ right=0.4cm ] {$Y_{4}$};   
    
    \draw ( 0.3535, 0.6123)--( 0.3535, 0.6123) node [below left] {$Y_5$};
    
  \draw[thick,fill] (0.7071,0.4082) circle [radius=0.02cm] node [below left] {$Y_{6/7}$};
    
    \draw (-0.7071,0.4082)--(0,0.8165)[line width=2];

    \draw (0,0.8165)--(0.7071,0.4082) [line width=2];
   
    \draw ( -1.4142 ,  -0.0001)--(0.7954    , 1.2759) [line width=0.5]; %%
   
   \draw ( -1.0606   , 0.2041)--( -1.0606   , 0.2041) node[above left ] {$\uc{A}(Y_3)$};

     \draw ( 1.4142 ,  -0.0001)--(-0.3535   , 1.0207) [line width=0.5];

   \draw ( 1.0606   , 0.2041)--( 1.0606   , 0.2041) node[above right ] {$\uc{A}(Y_5)$};

  \draw[thick,fill] ( -0.3536, 0.2041) circle [radius=0.02cm] node [below] {$\bm{y}$};
  
    \draw( -0.3536, 0.2041)-- ( -0.5303, 0.5103) [->,thick,dashed]node [above left] {$\Phi\big(P(3),\bm{y}\big)$};

  \draw[thick,fill] ( +0.3536, 1.4289)circle [radius=0.02cm] node [above] {$\bm{y}'$};
  
      \draw( +0.3536, 1.4289) -- ( 0.5303, 1.1227) [->,thick,dashed]node [below right] {$\Phi\big(P(3),\bm{y}'\big)$};

 %     \draw[thick,fill] (-0.3536,-0.6124) circle [radius=0.02cm] node [right] {$\Pi(y)$};
   % \draw[thick,fill] (-1.0607,-0.2041) circle [radius=0.02cm] node [right] {$\Pi(y')$};

  \end{tikzpicture}
  \end{center}
\caption{For the problem of example  Example~\ref{2negg}. Projected pattern images and extended images for the matrix $A$. $Y_{1/2}=Y\big(P(1)\big)=\uc{A}\Big(Y\big(P(1)\big)\Big)=Y\big(P(2)\big)=\uc{A}\Big(Y\big(P(2)\big)\Big)$, $Y_3=Y\big(P(3)\big)$, $Y_4=Y\big(P(4)\big)=\uc{A}\Big(Y\big(P(4)\big)\Big)$, 
$Y_5=Y\big(P(5)\big)$ and $Y_{6/7}=Y\big(P(6)\big)=\uc{A}\Big(Y\big(P(6)\big)\Big)=Y\big(P(7)\big)=\uc{A}\Big(Y\big(P(7)\big)\Big)$. Target vectors $\bm{y}$ and $\bm{y}'$ with normal projections onto $\uc{A}(Y_{3})$.
}\label{Pimages}
\end{figure}

\end{example}

\subsubsection{Steepest descent method}

For $A\in\Rmax^{n\times d}$, $\bm{y}\in\Rmax^n$, $\bm{x}\in\Rmax^d$ and any feasible pattern $P$, with $P\preceq \pattern(\bm{x})$, define the \emph{subgradient} $\nabla(\bm{x},P)$, by
\begin{equation}
\nabla(\bm{x},P)=\frac{d R_{P}(\bm{x})}{d\bm{x}}\Big|_{\uc{A}\big(X(P)\big)}.
\end{equation}
Since $C$ is orthogonal we have that
\begin{equation}
\nabla(\bm{x},P)_{j}=CC^{\top}\frac{d R_{P}(\bm{x})}{d\bm{x}}.
\end{equation}
So that
\begin{equation}
\nabla(\bm{x},P)_{j}= \frac{\sum_{\{i~:~c\circ \ell(i)=c(j)\}}a_{i\ell(i)}+x_{\ell(i)}-y_{i}}{|\{k~:~c(k)=c(j)\}|},
\end{equation}
if $\{i~:~c\circ \ell(i)=c(j)\}\neq\emptyset$ and $\nabla(\bm{x},P)_{j}=0$, otherwise. We say that a subgradient $\nabla(\bm{x},P)$ is \emph{admissible}, if there exists $\epsilon>0$, such that $\bm{x}-\mu\nabla(\bm{x},P)\in X(P)$, for all $\mu\in(0,\epsilon]$. It is easy to show that the subgradient  $\nabla(\bm{x},P)$ is admissible, if and only if
\begin{equation}
\nabla(\bm{x},P)_{j}>\nabla(\bm{x},P)_{k},
\end{equation}
whenever $j,k\in\pattern(\bm{x})_{i}$, $j\in P_{i}$ and $k\not\in P_{i}$, for some $i=1,\dots,n$, for all $j,k=1,\dots,d$. It therefore follows that $\nabla\big(\bm{x},\pattern(\bm{x})\big)$ is always admissible.  

Define the \emph{steepest descent gradient field} $F:\Rmax^d\mapsto \Rmax^d$, by $F(\bm{x})=-\nabla(\bm{x},P^{\ast})$, where 
\begin{equation}
P^{\ast}=\arg\max\{\|\nabla(\bm{x},P)\|_{2}~:~P\preceq \pattern(\bm{x}), ~ \hbox{$\nabla(\bm{x},P)$ is admissible}\}.
\end{equation}
We say that $\gamma:[0,T)\mapsto \Rmax^d$ is a \emph{steepest descent path} if 
\begin{equation}\label{spath}
\lim_{h\rightarrow 0_{+}}\frac{\gamma(t+h)-\gamma(t)}{h}=F\big(\gamma(t)\big),
\end{equation}
for all $t\in[0,T)$. For a pattern $P$ define the operator $\phi_{P}~:~\Rmax^d\times \R_{+}\mapsto \Rmax^d$, by 
\begin{equation}
\phi_{P}(\bm{x},0)=\bm{x}, \quad \frac{d\phi_{P}(\bm{x},t)}{dt}=-\nabla(\phi_{P}(\bm{x},t),P).
\end{equation}
Then we have
\begin{equation}
\lim_{t\rightarrow\infty}\phi_{P}(\bm{x},t)=\Psi(P,\bm{y},\bm{x})
\end{equation}
and
\begin{equation}
\phi_{P}(\bm{x},t)_{j}=x_{j}+\Big(1-\exp\big(-t|\{i~:~c\circ \ell(i)=c(j)\}|\big)\Big)\big(\Psi(P,\bm{y},\bm{x})-\bm{x}\big)_{j}.
\end{equation}
Also define 
\begin{align}
t^{\ast}& =\inf\{t>0~:~ \phi(\bm{x},t)\not\in X(P)\} \\ &=\min_{i=1,\dots,n, ~ j\not\in P_{i}} \inf\{t>0~:~ a_{ij}+\phi(\bm{x},t)_{j}=a_{i\ell(i)}+\phi(\bm{x},t)_{\ell(i)}\}.
\end{align}
Algorithm~\ref{steep} constructs a steepest descent path as a sequence of smooth segments. For each smooth segment of the path, the algorithm must determine whether the smooth flow reaches the fixed point $\Psi(P,\bm{y},\bm{x})$, in which case $t^{\ast}=\infty$, or leaves the domain of the current pattern, in which case $t^{\ast}$ is finite.

\begin{algorithm}[h]
\caption{ \label{steep} {\bf (Steepest descent)} 
Given an initial guess $\bm{x}$, returns an locally optimal solution to Problem~\ref{mppnreg} with $p=2$.}

\medskip

\begin{algorithmic}[1]

\While{not converged}
\State compute $P^{\ast}$ and $F(\bm{x})$
\If {$F(\bm{x})=0$} converged
\Else \State compute $t^{\ast}$
\If {$t^{\ast}=\inf$} update $\bm{x}\mapsfrom \Psi(P^{\ast},\bm{y},\bm{x})$
\Else 
\State update $\bm{x}\mapsfrom \phi_{P^{\ast}}(\bm{x},t^{\ast})$
\EndIf
\EndIf
\EndWhile
\State return solution $\bm{x}$

\end{algorithmic}

\end{algorithm}

The worst case cost of Algorithm~\ref{steep} comes from computing $P^{\ast}$ and $F(\bm{x})$. To do this we need to examine all patterns $P$, with $P\preceq \pattern(\bm{x})$, and there can be exponentially many of these. For example take $\bm{x}\in\Rmax^d$, with $\bm{x}_{j}=-a_{1j}$, then $\pattern(\bm{x})_{1}=\{1,\dots,d\}$ and there are at least $2^d-1$ patterns $P$ with $P\preceq \pattern(\bm{x})$. Computing the subgradient and determining the admissibility of an individual pattern has cost $\uc{O}(nd)$. Computing $t^{\ast}$ has cost $\uc{O}(nd)$. Therefore if Algorithm~\ref{steep} generates a sequence of $k$ smooth path segments and needs to check an average of $m$ subgradients on each calculation of $F(\bm{x})$, then the total cost is $\uc{O}(kmnd)$.

It therefore appears that finding the steepest descent direction is potentially very computationally expensive. In fact we can show that the easier problem of determining whether any direction that reduces the residual exists is NP-hard. To do this we show that the following problem, known as the set covering problem, can be solved by determining whether or not the zero vector is a local minimum of the residual for a $\big(n+m+m(m-1)/2+1\big)\times m$ max-plus $2$-norm regression problem. See Theorem~\ref{setsetset}.

\begin{problem}\label{setcovering}
Let $F=\big\{F_{i}\subset\{1,\dots,n\}~:~i=1,\dots,m\big\}$ be a family of subsets with $\cup_{i=1}^{m}F_{i}=\{1,\dots,n\}$ and let $1<k<m$. Does there exist a subset $\{j(1),\dots,j(k)\}\subset\{1,\dots,m\}$, such that  $\cup_{i=1}^{k}F_{j(i)}=\{1,\dots,n\}$?
\end{problem}

\subsubsection{Newton's method}

The results of the previous section and Theorem~\ref{setsetset} suggest that computing exact local minima for the max-plus $p=2$ regression problem might not be computationally feasible for larger problems. Instead we propose using the following technique, which consists of Newton's method with an undershooting parameter.  

Recall that the squared residual $R:\Rmax^d\mapsto\mathbb{R}_{+}$, given by $R(\bm{x})=\|A\otimes\bm{x} -\bm{y}\|_{2}^{2}/2$ is picewise quadratic and that  for $\bm{x}\in\Cl\big(X(P)\big)$, we have $R(\bm{x})=R_{P}(\bm{x})$.
Newton's method minimizes a function by iteratively mapping to the minimum of a local quadratic approximation to that function. In the case of the squared residual $R$ this means iteratively mapping to the minimum of the locally quadratic piece. There are several options when implementing Newton's method, for example when $\bm{x}$ is contained in the closure of more than one domain, which pattern do we choose? Also, how do we choose between non-unique minima? The method we set out below is chosen primarily for its simplicity. 
\medskip

For $\bm{x}\in\Rmax^d$, define the \emph{subpattern} $p(\bm{x})\in\{1,\dots,d\}^n$, by 
\begin{equation}
p(\bm{x})_{i}=\min\big(\pattern(\bm{x})_{i}\big)=\min\{j~:~(A\otimes \bm{x})_{i}=a_{ij}+x_{j}\}, \quad i=1,\dots,n.
\end{equation}
Then $p(\bm{x})\preceq \pattern(\bm{x})$ and $\bm{x}\in\Cl\Big(X\big(p(\bm{x})\big)\Big)$.
Define the \emph{Newton update map} $\mathcal{N}:\Rmax^d\mapsto \Rmax^d$, by
\begin{equation}\label{newtonmappp}
\mathcal{N}(\bm{x})=\Psi\big(p(\bm{x}),\bm{y},\bm{x}\big).
\end{equation}
The map \eqref{newtonmappp} is set to always chooses a pattern whose domain is of the maximum possible dimension. In the case where the minima is non-unique, it returns the one that is closest to the current point.

A difficulty for Newton's method is that the non-differentiability of $R$ means that the iteration needn't converge to a local minima and can instead get caught in a periodic orbit. This makes choosing a stopping condition difficult. We use the rule that if the residual has not decreased in some fixed number of steps then we terminate the algorithm and return the best solution from the iterations orbit. We also include a shooting parameter $\mu\in(0,1)$ and make the update $\bm{x}\mapsfrom (1-\mu)\bm{x}+\mu \mathcal{N}(\bm{x})$. Choosing $\mu<1$ causes the method to undershoot and so avoid being caught in the periodic orbits mentioned previously. If Algorithm~\ref{newton} iterates $k$ times then it has cost $\uc{O}(knd)$. In the Numerical examples that follow we randomly sample ten different initial conditions then apply Algorithm~\ref{newton} once with $\mu=1$ then once more with $\mu=0.05$, each time using $t=5$, then pick the best approximate solution. Optimizing the choice of parameters and random starting conditions is an important topic for future research. 

\begin{algorithm}
\caption{ \label{newton} {\bf (Newton's method)} 
Given an initial guess $\bm{x}$, returns an approximate solution to Problem~\ref{mppnreg} with $p=2$. Parameters are $t\in\mathbb{N}$ the number of iterations for stopping condition and $\mu\in(0,1)$ the undershooting parameter, which may be allowed to vary during the computation.}

\medskip

\begin{algorithmic}[1]

\State set $r_{\min}=\infty$
\While{not terminated}
\State update $\bm{x}\mapsfrom (1-\mu)\bm{x}+\mu \mathcal{N}(\bm{x})$
\If {$R(\bm{x})<r_{\min}$} $r_{\min}=R(\bm{x})$, $\hat{\bm{x}}=\bm{x}$, \EndIf
\If {$r_{\min}$ not decreased for $t$ iterations} terminate
\EndIf
\EndWhile
\State return approximate solution $\hat{\bm{x}}$

\end{algorithmic}

\end{algorithm}

\section{Time-series analysis}\label{tss}

Consider the $d$-dimensional stochastic max-plus linear dynamical system 
\begin{equation}\label{dysys}
\bm{x}(n+1)=M\otimes \bm{x}(n)+\zeta(n),
\end{equation}
where $M\in\Rmax^{d\times d}$ and $\zeta(0),\zeta(1),\dots\in\R^d$ are i.i.d Gaussians with mean zero and covariance matrix $\sigma^2 I$. Suppose that we do not know $M$, but that we have observed an orbit $\bm{x}(0),\bm{x}(1),\dots,\bm{x}(N)$ and want to estimate $M$ from this data. The maximum likelihood estimate for this inference problem is given by
\begin{equation}\label{invdsres}
\min_{A\in\Rmax^{d\times d}}\mathbb{P}\{\bm{x}(0),\bm{x}(1)\dots,\bm{x}(N)~|~ \bm{x}(k+1)=A\otimes \bm{x}(k)+\zeta(k), ~ k=0,1,\dots,N-1 \}.
\end{equation}
This problem can be expressed as $d$ independent regression problems as follows. Expanding \eqref{invdsres} yields
\begin{align}
\mathbb{P}\{\bm{x}(0),\dots,\bm{x}(N) ~|~ A\} &=\prod_{n=0}^{N-1}\mathbb{P}\{\zeta(n)=\bm{x}(n+1)-A\otimes \bm{x}(n)\} \\  &=\prod_{n=0}^{N-1}\prod_{k=1}^{d}\frac{1}{\sqrt{2\pi\sigma^2}}\exp \left({\frac{-\Big(\bm{x}(n+1)-A\otimes \bm{x}(n)\Big)_{k}^2}{2\sigma^2}}\right).
\end{align}
The log likelihood is therefore given by
\begin{equation}\label{llhood}
\log\big(\mathbb{P}\{\bm{x}(0),\dots,\bm{x}(N) ~|~ A\}\big) =\frac{-Nd}{2}\log(2\pi\sigma^2)+\frac{1}{2\sigma^2}\|A\otimes X(:,1:N)-X(:,2:N+1)\|_{F}^{2},
\end{equation}
where $X\in\Rmax^{d\times (N+1)}$ is the matrix whose columns are the time series observations $\bm{x}(0),\dots,\bm{x}(N)$ and where we use the Matlab style notation $X(\mathcal{I},\mathcal{J})$ to indicate the submatrix of formed from the intersection of the $\mathcal{I}$ rows and $\mathcal{J}$ columns of $X$ and use the symbol $:$ alone to denote the full range of row/cols. Next note that
\begin{equation}\label{ffffff}
\|A\otimes X(:,1:N)-X(:,2:N+1)\|_{F}^{2}=\sum_{k=1}^{d}\|A(k,:)\otimes X(:,1:N)-X(k,2:N+1)\|_2^2.
\end{equation}
Minimizing \eqref{invdsres} is therefore equivalent to minimizing each of the terms summed over in \eqref{ffffff}. The $k$th of these terms measures our model's ability to predict the value of the $k$th variable at the next time step. To minimize this error we choose the $k$th row of $A$ by
\begin{equation}\label{rowA}
A(k,:)=\arg\min_{\bm{x}\in\Rmax^{1\times d}}\|\bm{x}\otimes X(:,1:N) -X(k,2:N+1)\|_{2},
\end{equation}
which requires us to solve an $n\times d$ max-plus $2$-norm regression problem. We can therefore solve \eqref{invdsres} by solving $d$ such regression problems. 

\begin{example}\label{invpegg}
Consider the matrix
$$
M=\left[\begin{array}{cccc} 7 &  15 &  10 &  -\infty   \\  14 &  -\infty &  11 &  11  \\  14 &  -\infty &  -\infty &  -\infty   \\  15 &  8 &  7 &  9  \\  \end{array}\right].
$$
From the initial condition $\bm{x}(0)=[0,0,0,0]^{\top}$ we generate two orbits of length $n=200$ by iterating \eqref{dysys}. One with a low noise level, $\sigma=1$ and one with a  high noise level, $\sigma=5$. Next we compute the maximum likelihood estimate for $M$ from the time series, by applying Algorithm~\ref{newton} to each of the row problems \eqref{rowA}, for $k=1,\dots,d$. Our estimates are given by
$$ 
  A(\sigma=1)=\left[\begin{array}{cccc} 2.65 &  14.9 &  10.6 &  10  \\  13.8 &  -30.4 &  -53.2 &  10.9  \\  14 &  7.06 &  8.5 &  5.7  \\  15 &  9.4 &  -50.3 &  8.28  \\  \end{array}\right], \quad A(\sigma=5)=\left[\begin{array}{cccc} 8.24 &  14.1 &  11 &  1.67  \\  13.8 &  -\infty &  9.28 &  11.1  \\  13.8 &  -\infty &  -\infty &  -\infty   \\  14.3 &  9.21 &  7.49 &  7.62  \\  \end{array}\right].
 $$
 Table~\ref{resftab} displays the Frobenius error term \eqref{ffffff} for each of these estimates. Note that both of these estimates fit the data better than the true system matrix $M$, which indicates that Algorithm~\ref{newton} is able to find close to optimal solutions to the regression problem. 
 
Comparing our estimates to $M$, we see that in both cases we have inferred values that are roughly correct for the larger entries in the matrix but that the minus infinities are poorly approximated in both cases and that some of the smaller finite entires are poorly approximated in the low noise case. For each orbit we record the matrix $S\in\mathbb{N}^{4\times 4}$, with
$$
s_{ij}=|\{0\leq n<N-1 ~:~\big(A\otimes x(n)\big)_{i}=a_{ij}+x(n)_{j}\}|,
$$
which records how often variable $j$ attains the maximum in determining variable $i$ at the next time step, for $i,j=1,\dots,4$. Therefore $s_{ij}$ can be  thought of as a measure of how much evidence we have to infer the parameter $a_{ij}$ from the orbit. These matrices are given by
$$
S(\sigma=1)=\left[\begin{array}{cccc} 0 &  201 &  0 &  0  \\  167 &  0 &  5 &  29  \\  201 &  0 &  0 &  0  \\  197 &  0 &  0 &  4  \\  \end{array}\right], \quad S(\sigma=5)=\left[\begin{array}{cccc} 30 &  137 &  34 &  0  \\  108 &  0 &  41 &  52  \\  201 &  0 &  0 &  0  \\  133 &  30 &  12 &  26  \\  \end{array}\right].
$$
Comparing the results it is clear that our inferences are more accurate for entries with more evidence. In the low noise case the evidence is all contained on a small number of entries, as under the nearly deterministic behavior of this regime only a few positions are ever able to attain the maximum. In the high noise case the more random behavior means that more entries are able to attain the maximum and therefore the evidence is more uniformly distributed, except onto the minus infinity entries, which can never attain the maximum.

\end{example}

Inferring the values of entries that do not play a role in the dynamics or only play a very small role is therefore an ill posed problem and consequently we obtain MLE matrices $A$ that do a good job of fitting the data but which are not close to the true system matrix $M$. There are two common strategies for coping with such ill posed inverse problems. The first is to choose a prior distribution for the inferred parameters, then compute a maximum a posteri estimate which minimizes the likelihood times the prior probability. The second approach is to add a regularization penalty to the targeted residual. Regularization is typically used to improved the well-posedness of inverse problems and to promote solutions which are in some way simpler. Typical choices for conventional linear regression problems are the $1$-norm or $2$-norm of the solution. For max-plus linear $2$-norm regression we propose the following regularization penalty, which is chosen to promote solutions $\bm{x}\in\Rmax^d$ with smaller entries and with more entries equal to $-\infty$. 

\begin{problem}\label{regl2}
For $A\in\mathbb{R}_{\max}^{n\times d}$, $\bm{y}\in\mathbb{R}_{\max}^{n}$ and $\lambda\geq 0$, we seek
\begin{equation}\label{qqq1}
\min_{\bm{x}\in\mathbb{R}_{\max}^{d}}\Big(\|\big(A\otimes \bm{x}\big)-y\bm{y}\|_{2}^2+\lambda\sum_{j=1}^{d}\bm{x}_{j}\Big).
\end{equation}
\end{problem}

Problem~\ref{regl2} can be solved by solving a sequence of max-plus $2$-norm regression problems by an approach which is inspired by the iteratively reweighed least squares method for solving conventional $1$-norm regularized $2$-norm regression problems.  Let $I\in\Rmax^{d\times d}$ be the max-plus identity matrix with zeros on the diagonal and minus infinities off of the diagonal and consider the residual
\begin{align}
\label{ww1} \left\|\left[\begin{array}{c} A \\ I \end{array}\right]\otimes \bm{x}-\left[\begin{array}{c} \bm{y} \\ \bm{x}'-\lambda/2 \end{array}\right]\right\|_{2}^2&=\|A\otimes \bm{x}-\bm{y}\|_{2}^{2}+\sum_{j=1}^{d}(\lambda/2+\bm{x}-\bm{x}')^2 \\ &=\|A\otimes \bm{x}-\bm{y}\|_{2}^{2}+\lambda\sum_{j=1}^{d}(\bm{x}-\bm{x}')_{j}+\uc{O}\big((\bm{x}-\bm{x}')^2\big).
\end{align}
Therefore, for $\bm{x}$ close to $\bm{x}'$, \eqref{ww1} only differs from \eqref{qqq1} by a constant factor. Algorithm~\ref{irsls} computes a sequence of approximate solutions, each time shifting the  additional target variables in \eqref{ww1} to match the gradient of the residual in \eqref{qqq1}. If a component of the solution appears to be diverging to minus infinity, then we set it to equal this limit.

\begin{algorithm}
\caption{ \label{irsls} {\bf (Iteratively reshifted least squares)} 
Given an initial guess $\bm{x}$, returns an approximate solution to Problem~\ref{regl2}.}

\medskip

\begin{algorithmic}[1]

\While{not converged}
\State apply a max-plus 2-norm regression solver to compute
 $$\bm{x}\mapsfrom \arg\min_{\bm{x}'\in\Rmax^d}\left\|\left[\begin{array}{c} A \\ I \end{array}\right]\otimes \bm{x}'-\left[\begin{array}{c} y \\ \bm{x}(k-1)-\lambda/2 \end{array}\right]\right\|_{2}^2$$
\EndWhile
\State return approximate solution $\bm{x}$

\end{algorithmic}

\end{algorithm}

\begin{example}
Returning to the problem of Example~\ref{invpegg}. We repeat our analysis of the time-series data only this time we include a regularization term when computing each row via \eqref{rowA}. The results of our regularized inference are as follows 
$$
 {A}(\sigma=1,\lambda=10)=\left[\begin{array}{cccc} -\infty &  15 &  -\infty &  -\infty   \\  13.9 &  -\infty &  -\infty &  11  \\  14 &  -\infty &  -\infty &  -\infty   \\  14.9 &  -\infty &  -\infty &  -\infty   \\  \end{array}\right], \quad {A}(\sigma=5,\lambda=10)=\left[\begin{array}{cccc} 8.05 &  14 &  10.5 &  -\infty   \\  13.7 &  -\infty &  9.03 &  10.9  \\  13.8 &  -\infty &  -\infty &  -\infty   \\  14.1 &  8.97 &  7.26 &  7.4  \\  \end{array}\right].
 $$
\end{example}
Note that any entry with little of no evidence is set to minus infinity and that the remaining entries are all fairly accurate approximations of the entries in the true system matrix $M$. Table~\ref{resftab} shows that applying the regularization penalty with $\lambda=10$ only results in a tiny degradation in the solutions fit to the data.

\begin{table}
\centering
\caption{For the inverse problem of Example~\ref{invpegg}. Squared Frobenius norm residual $\|A\otimes X(:,1:n)-X(:,2:n+1)\|_{F}^2$, for low and high noise orbits, with and without regularization penalty. All numeric values given to two decimal places.}\label{resftab}
\begin{tabular}{c|c|c|c}
 & $M$ & $A(\sigma,\lambda=0)$ &  $A(\sigma,\lambda=10)$ \\
\hline
$ \sigma=1$ & 233.78 &  227.41 & 251.86 \\
\hline
$ \sigma=5$ & 5308.58 & 5267.86 & 5275.12
\end{tabular}
\end{table}

\section{Network structure analysis}\label{netset}

In this section we show how min-plus low-rank matrix approximation can be used to analyze a networks structure. Consider the simple tripartite network illustrated in Figure~\ref{smallnetfig} (a) and let $M_{L}\in\Rmin^{n\times d}$, $M_{R}\in\Rmin^{d\times m}$ and $D\in\Rmin^{n\times m}$ be the matrices described in Example~\ref{integg1}. Now suppose that we do not know $M_{L}$ or $M_{R}$ but that we are able to observe $C=D+\zeta$, where $\zeta$ is an $n\times d$ matrix of i.i.d. $(0,\sigma)$ Gaussians. A maximum likelihood estimate for $M_{L}$ and $M_{R}$, i.e. for the edge lengths, is obtained by solving Problem~\ref{minfac}, which is to compute a best fit low rank factorization approximation for $C$.
\begin{problem}\label{minfac}
For $C\in\Rmin^{n\times m}$ and $0<d\leq \min\{n,m\}$, we seek
$$
\min_{A\in\Rmin^{n\times d}, ~ B\in\Rmin^{d\times m}}\|C-A\boxtimes B\|_{F}^{2}.
$$
\end{problem}
For a permutation $\pi\in\Pi_{d}$ and a vector $s\in\R^{d}$, consider the min-plus matrices $Q,P\in\Rmin^{d\times d}$, defined by
\begin{equation}
q_{ij}=\left\{\begin{array}{cc} s_{i}, & \hbox{if $i=\pi(j)$,} \\ \infty, & \hbox{otherwise,}\end{array}\right.\quad p_{ij}=\left\{\begin{array}{cc} -s_{i}, & \hbox{if $i=\pi^{-1}(j)$,} \\ \infty, & \hbox{otherwise.}\end{array}\right.
\end{equation}
Then $Q\boxtimes P=I$, where $I$ is the min-plus identity matrices with zeros on the diagonal and infinities off of the diagonal. Now note that for any $A\in\Rmin^{n\times d}, ~ B\in\Rmin^{d\times m}$ we have 
\begin{equation}
\|C-A\boxtimes B\|_{F}^{2}=\|C-(A\boxtimes P)\boxtimes (Q\boxtimes B)\|_{F}^{2}.
\end{equation}
Hence solutions to Problem~\ref{minfac} can be partitioned into equivalence classes modulo permutation and translation of the columns of $A$ and rows of $B$. We can fix a single solution from each of these equivalence classes by requiring 
\begin{equation}\label{conditionsfac}
\min_{i=1}^{n}a_{ij}=0 ,~\hbox{for $j=1,\dots,d$}, \quad a_{n,1}\geq a_{n,2}\geq\dots\geq a_{n,d}.
\end{equation}
Algorithm~\ref{irsls} takes a simple approach to solving Problem~\ref{minfac} by alternately updating the rows of $A$ and the columns of $B$. Each of these row/col updates requires the solution of an $n\times d$ or $m\times d$ min-plus $2$-norm regression problem.
\begin{algorithm}
\caption{ \label{minpfacalg} {\bf (min-plus approximate factorization)} 
Given an initial guess $A\in\Rmin^{n\times d}, ~ B\in\Rmin^{d\times m}$, returns an approximate solution to Problem~\ref{minfac}.}

\medskip

\begin{algorithmic}[1]

\While{not converged}
\For {$i=1,\dots,n$}
\State $A(i,:)\mapsfrom \arg\min_{\bm{x}\in\Rmin^{1\times d}}\|\bm{x}\boxtimes B-C(i,:)\|_{2}^{F}$
\EndFor
\For {$j=1,\dots,m$}
\State $B(:,j)\mapsfrom\arg\min_{\bm{x}\in\Rmin^{d}}\|A\boxtimes \bm{x}-C(:,j)\|_{2}^{F}$
\EndFor
\EndWhile
\State translate and permute to satisfy \eqref{conditionsfac}.
\State return factors $A,B$
\end{algorithmic}

\end{algorithm}

\begin{example}
For the true network factors $M_{L}$ and $M_{R}$ below we generate the matrix $C=M_{L}\boxtimes M_{R}+\zeta$, where $\zeta$ is an $n\times d$ matrix of i.i.d. $(0,1)$ Gaussians. 
$$
M_{L}=\left[\begin{array}{cc} 1 &  1  \\  0 &  4  \\  7 &  2  \\  5 &  0  \\  3 &  1  \\  \end{array}\right],\quad  M_{R}=\left[\begin{array}{cc} 3 &  8  \\  4 &  8  \\  11 &  12  \\  10 &  9  \\  3 &  13  \\  \end{array}\right], \quad C=\left[\begin{array}{ccccc} 3.59 &  6.07 &  12.5 &  10.2 &  3.57  \\  3.42 &  2.75 &  10.8 &  11 &  3.21  \\  11.8 &  10.3 &  15.4 &  9.74 &  10.6  \\  5.91 &  8.62 &  11.9 &  9.7 &  9.77  \\  3.98 &  8.04 &  14.5 &  10.2 &  6.39  \\  \end{array}\right].
$$
We apply Algorithm~\ref{minpfacalg} to $C$ and obtain the maximum likelihood estimates $A$ and $B$ for the factors as follows
$$
A=\left[\begin{array}{cc} 1.14 &  0.769  \\  0 &  0.942  \\  7.54 &  2.16  \\  4.8 &  0  \\  2.8 &  1.36  \\  \end{array}\right], \quad B=\left[\begin{array}{cc} 2.57 &  7.96  \\  4.32 &  8.86  \\  11.2 &  13.4  \\  11 &  9.36  \\  3.54 &  9.01  \\  \end{array}\right].
$$
For these matrices we have $\|C-M_{L}\boxtimes M_{R}\|_{F}^2=27.77$ and $\|C-A\boxtimes B\|_{F}=22.09$.
\end{example}

Now consider the undirected bipartite graph of Figure~\ref{smallnetfig} (b) and let $M\in\Rmin^{n\times d}$ be the matrix described in Example~\ref{integg2}. Then $D=M\boxtimes M^{\top}$ is the $n\times n$ min-plus matrix such that $d_{ij}$ is equal to the length of the shortest two edge path from $x(i)$ to $x(j)$. Now suppose that we do not know $M$ but that we are able to observe $C=D+\zeta$, where $\zeta$ is an $n\times d$ matrix of i.i.d. $(0,\sigma)$ Gaussians. A maximum likelihood estimate for $M$, i.e. for the edge lengths, is obtained by solving Problem~\ref{minfacsym}. 
\begin{problem}\label{minfacsym}
For $C\in\Rmin^{n\times n}$ and $0<d\leq n$, we seek
$$
\min_{A\in\Rmin^{n\times d}}\|C-A\boxtimes A^{\top}\|_{F}^{2}.
$$
\end{problem}
Alternately we may want to allow paths of length zero between a vertex and itself so that $d_{ii}=0$ for $i=1,\dots,n$. In this case we have $D=I\boxplus M\boxtimes M^{\top}$, where $I$ is the min-plus identity matrix with zeros on the diagonal and plus infinities off of the diagonal. If we observe $C=D+\zeta$ as before then the maximum likelihood estimate for the edge lengths can be obtained by solving Problem~\ref{minfacsym2}. 
\begin{problem}\label{minfacsym2}
For $C\in\Rmin^{n\times n}$ and $0<d\leq n$, we seek
$$
\min_{A\in\Rmin^{n\times d}}\sum_{i\neq j}\big(C-A\boxtimes A^{\top}\big)_{ij}^{2}.
$$
\end{problem}
We are unable to adapt Algorithm~\ref{minpfacalg} to solve Problem~\ref{minfacsym} or \ref{minfacsym2} as we do not have any compatible way of enforcing symmetry in the factors at each step. Instead we apply Newton's method, using the whole of the approximate factor as the iterate. See Algorithm~\ref{newtomsymfac1}.

\begin{example}
For the true network factors $M$ below we compute the matrix $C=M\boxtimes M^{\top}\boxplus I+\zeta$, where $\zeta\in\R^{5\times 5}$ has zeros on the diagonal and i.i.d. $(0,1)$ Gaussians off of the diagonal. We apply Algorithm~\ref{newtomsymfac1} to $C$ and obtain the maximum likelihood estimate $A$ for the factor as follows
$$
M=\left[\begin{array}{cc} 8 &  4  \\  8 &  3  \\  1 &  8  \\  2 &  7  \\  7 &  7  \\  \end{array}\right]
, \quad C=\left[\begin{array}{ccccc} 0 &  7.53 &  9.87 &  11 &  11  \\  7.93 &  0 &  9.03 &  10.6 &  10.2  \\  9.12 &  9.75 &  0 &  3.66 &  8.86  \\  10.6 &  10.3 &  3.44 &  0 &  9.07  \\  11.5 &  10.2 &  8.07 &  9.48 &  0  \\  \end{array}\right], \quad A=\left[\begin{array}{cc} 8.58 &  4.68  \\  8.4 &  2.76  \\  1.03 &  9.75  \\  2.19 &  8.62  \\  7.3 &  7.16  \\  \end{array}\right].
$$
For these matrices we have 
$$
\sum_{i\neq j}\big(C-M\boxtimes M^{\top}\big)_{ij}^{2}=6.19, \quad \sum_{i\neq j}\big(C-A\boxtimes A^{\top}\big)_{ij}^{2}=2.73.
$$
\end{example}

We have shown how min-plus matrix factorization is able to approximately recover the edge lengths from noisy observations of shortest path distances for tripartite and bipartite networks of the sorts illustrated in Figure~\ref{smallnetfig}. Now suppose that $G$ is an undirected network of unconstrained structure, with vertices $x(1),\dots,x(n)$ and that $D\in\Rmin^{n\times n}$ records the shortest path distances between the vertices of $G$. In this context a solution to Problem~\ref{minfacsym2} provides a set of $d$ additional hub vertices $y(1),\dots,y(d)$, such that the distance from $x(i)$ to $y(k)$ is given by $a_{ik}$. These hubs then approximate the shortest path distances through the original network by 
$$
d_{ij}\approx \min_{k=1}^{d}a_{ik}+a_{kj}, \quad \hbox{for $i\neq j$}.
$$
Thus the symmetric low rank matrix factorization approximation $D\approx A\boxtimes A^{\top}\boxplus I$ captures the distances of the original network with a bipartite graph structure containing fewer connections. In this sense, the approximation can be viewed as a min-plus linear model order reduction of the network. Such a reduction may be useful as a means to characterize a network's structure and as a means to extract a small number of features that can be used to describe the location of the vertices in a network. Note that for larger networks, such a factorization could be obtained from only a small subset of the rows of $D$, so that computing all of the shortest path distances is not necessary for this reduction. 

\begin{example}\label{dolphinegg}

We use the dolphin social network presented in \cite{dolphins}. This network consists of 62 vertices, each of which represents a different dolphin, with an edge connecting two dolphins if they are observed to regularly interact. This small social network is frequently used to test or illustrate data analysis techniques.  We first compute the distance matrix $D\in\Rmin^{62\times 62}$, with $d_{ij}=$ the length of the shortest path through the network from dolphin $i$ to $j$. Next we apply Algorithm~\ref{newtomsymfac1} to compute the best fit rank-3 factor $A\in\Rmin^{62\times 3}$.

We can think of the columns of $A$ as representing neighborhoods in the network. If $a_{ik}$ is small then dolphin $i$ is close to neighborhood $k$, and therefore dolphin $i$ will be close to any other dolphin that is also close to neighborhood $k$. Otherwise if $a_{ik}$ is large then dolphin $i$ is far from neighborhood $k$ and will not be close to any dolphin that is close to neighborhood $k$, unless they share some other mutually close neighborhood. 

Just as in conventional principal component analysis, the rows of $A$ can be thought of as latent factors that parametrize the rows of $D$. Equivalently, the $i$th row $A_{i\cdot}$ encapsulates information about dolphin $i$'s position in the network, so we can study the structure of the network by examining $\{A_{i\cdot}~:~i=1,\dots,n\}$, which is simply a scattering of points in $\R^3$. Figure~\ref{dolphins1} displays the dolphin social graph as well as the rows of $A$. Note that we have plotted the reciprocals of the entries in $A$, so that a large value of $1/a_{ik}$ indicates that dolphin $i$ is close to neighborhood $k$. The dolphins have then been color coded according to their closest neighborhood. Comparing the network to the scattering of points, it is clear that the min-plus factorization has captured the predominant structure of the graph. We can easily identify strongly connected groups or clusters of dolphins and spot individuals that provide bridges between different groups.

\begin{figure}
\begin{center}
\subfigure[Dolphin social network.]{\includegraphics[scale=0.45]{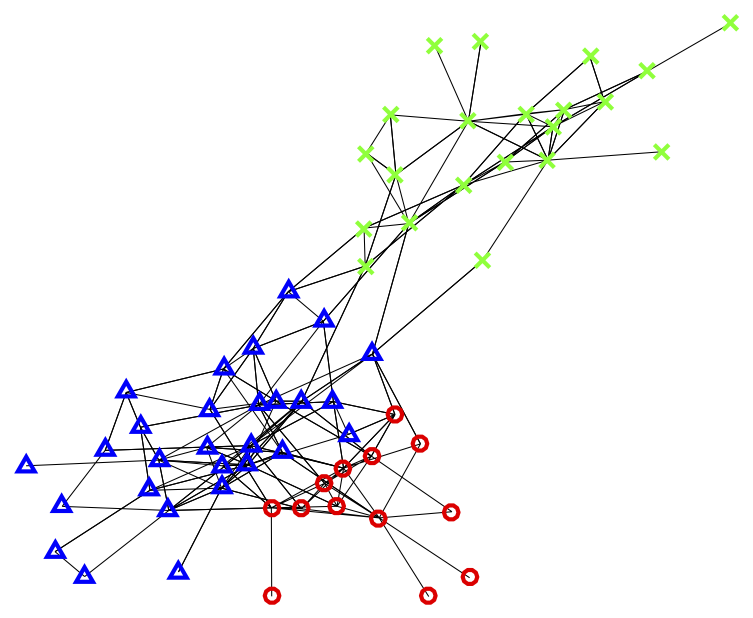}} \hspace{15mm}  \subfigure[Min-plus latent factors.]{\includegraphics[scale=0.5]{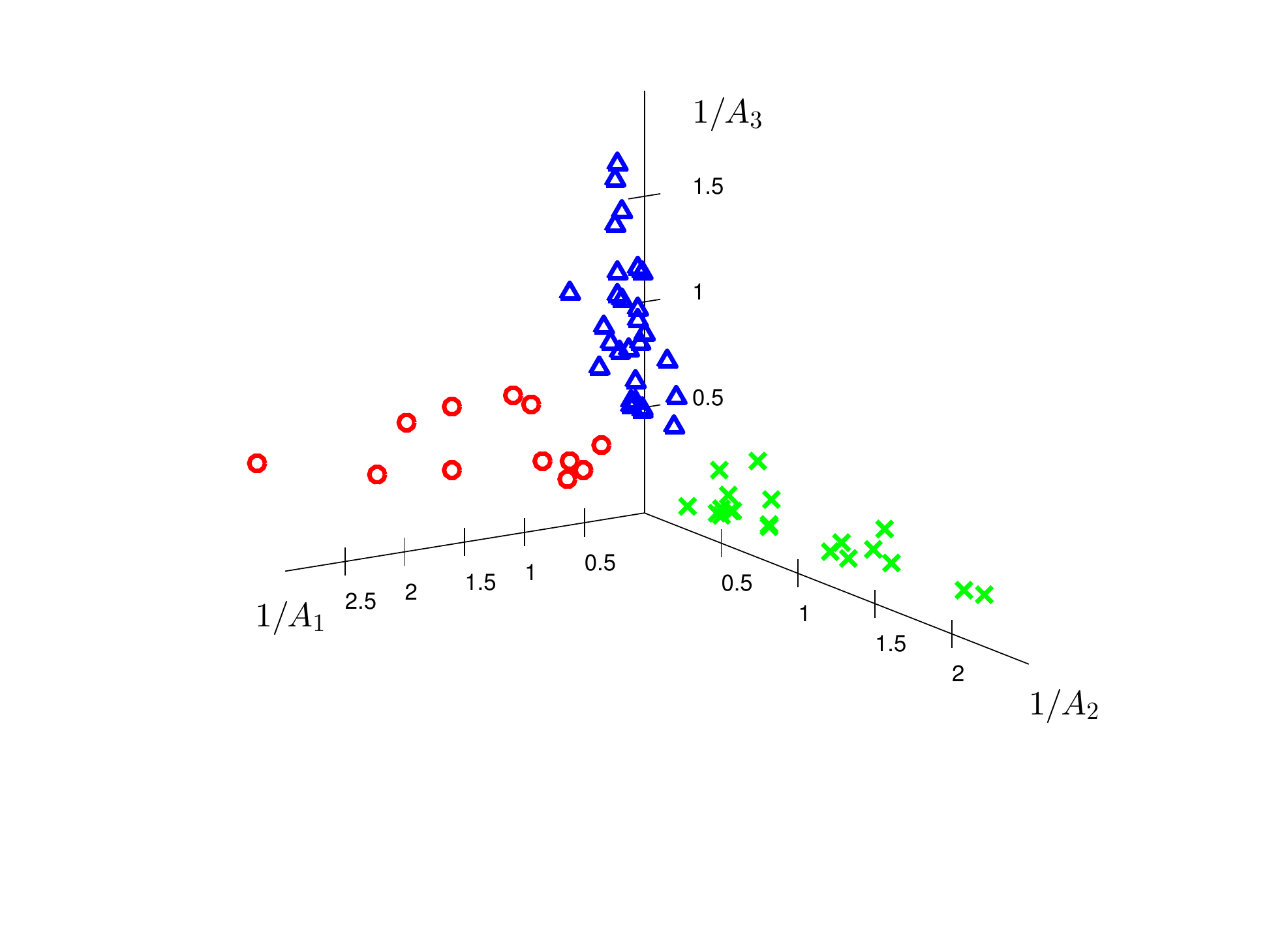}}

\caption{For the network of Example~\ref{dolphinegg}.}\label{dolphins1}
\end{center}
\end{figure}

\end{example}
\section{Polynomial regression}\label{polf}

A univariate, degree-d, max-plus polynomial is a function $p:\Rmax\mapsto\Rmax$ of the form 
$$
p_{\bm{a}}(x)=\bigoplus_{n=0}^{d}a_{n}\otimes x^{\otimes n}=\max_{n=0}^{d}(a_{n}+nx),
$$
where $\bm{a}\in\Rmax^{d+1}$ are the polynomial coefficients. More generally a multivariate max-plus polynomial is a function $p:\Rmax^m\mapsto\Rmax$ of the form 
$$
p_{\bm{a},S}(\bm{x})=\bigoplus_{n=1}^k \bm{a}_{n}\bigotimes_{i=1}^{d} x_{i}^{\otimes s_{ni}}=\max_{n=1}^{k}\big(a_{n}+S(n,:)\bm{x}\big),
$$
where $\bm{a}\in\Rmax^{k}$ are the polynomial coefficients and $S\in\mathbb{N}^{k \times d}$ is the matrix of monomial slopes. 

\begin{problem}\label{polregprob}
For $\bm{x}(1),\dots,\bm{x}(N)\in\Rmax^m$, $\bm{y}\in\Rmax^N$ and $S\in\mathbb{N}^{k \times d}$, we seek
$$
\min_{\bm{a}\in\Rmax^k}\sum_{i=1}^{N}\Big(p_{\bm{a},S}\big(\bm{x}(i)\big)-y_i\Big)^2.
$$
\end{problem}
Just as in the classical case it is straightforwards to convert a max-plus polynomial regression problem into a max-plus linear regression problem. Let $X\in\Rmax^{N\times k}$ be the matrix with $(X)_{ij}=S(j,:)\bm{x}(i)$, then the solution to Problem~\ref{polregprob} is given by
\begin{equation}\label{polylin}
\min_{\bm{a}\in\Rmax^k}\|X\otimes \bm{a}-\bm{y}\|_{2}^2,
\end{equation}
Note that this result extends to the more general case $S\in\R^{k\times d}$, which corresponds to fitting a convex piecewise affine function with fixed slope values.
\begin{example}\label{pol1}
We sample $20$ i.i.d $(0,1)$ Gaussian data points $x_{1},\dots,x_{20}\in\Rmax$. Next we compute $y_{i}=p_{\bm{a}}(x_{i})+\zeta_{i}$, where $p_{\bm{a}}$ is the max-plus polynomial with $\bm{a}=[0,1,0]^{\top}$ and $\zeta_{1},\dots,\zeta_{20}$ are i.i.d $(0,0.5)$ Gaussians. We solve Problem~\ref{polregprob} by converting it into a max-plus linear $2$-norm regression problem as in \eqref{polylin} and obtain the maximum likelihood estimate for the polynomial coefficients $\bm{b}=[ 0.19,1.08,-0.12]$. Figure~\ref{polypolt} (a) is a plot of the data $(x_{i},y_{i})_{i=1}^{20}$ along with the graph of the function $p_{\bm{b}}$. For this data we have $\|X\otimes \bm{a}-y\|_{2}=1.8247$ and $\|X\otimes \bm{b}-\bm{y}\|_{2}=1.7041$.

\end{example}

\begin{example}\label{pol2}
We sample $200$ i.i.d data points $\bm{x}(1),\dots,\bm{x}(200)\in\Rmax^2$ uniformly from $[-1,1]^{2}$. Next we set $\bm{y}\in\Rmax^{200}$, with $y_{i}=\|\bm{x}(i)\|_2^2$, for $i=1,\dots,200$. We choose slope values
$$
S=\left[\begin{array}{ccccc} 0 & 1 & -1 & 0 & 0  \\ 0 & 0 & 0 & 1 & -1 \end{array}\right]^{\top}, 
$$
then solve Problem~\ref{polregprob} by converting it into a linear regression problem as in \eqref{polylin} and obtain the maximum likelihood estimate for the polynomial coefficients $\bm{a}=[0.10,   -0.63,   -0.67,   -0.77,   -0.75]$. Figure~\ref{polypolt} (b) is a plot of the data $(\bm{x}(i),y_{i})_{i=1}^{200}$ along with a surface plot of the function $p_{\bm{a},S}$.

\end{example}

\begin{figure}[t]
\begin{center}
\subfigure[For the problem of Example~\ref{pol1}]{\includegraphics[scale=0.4]{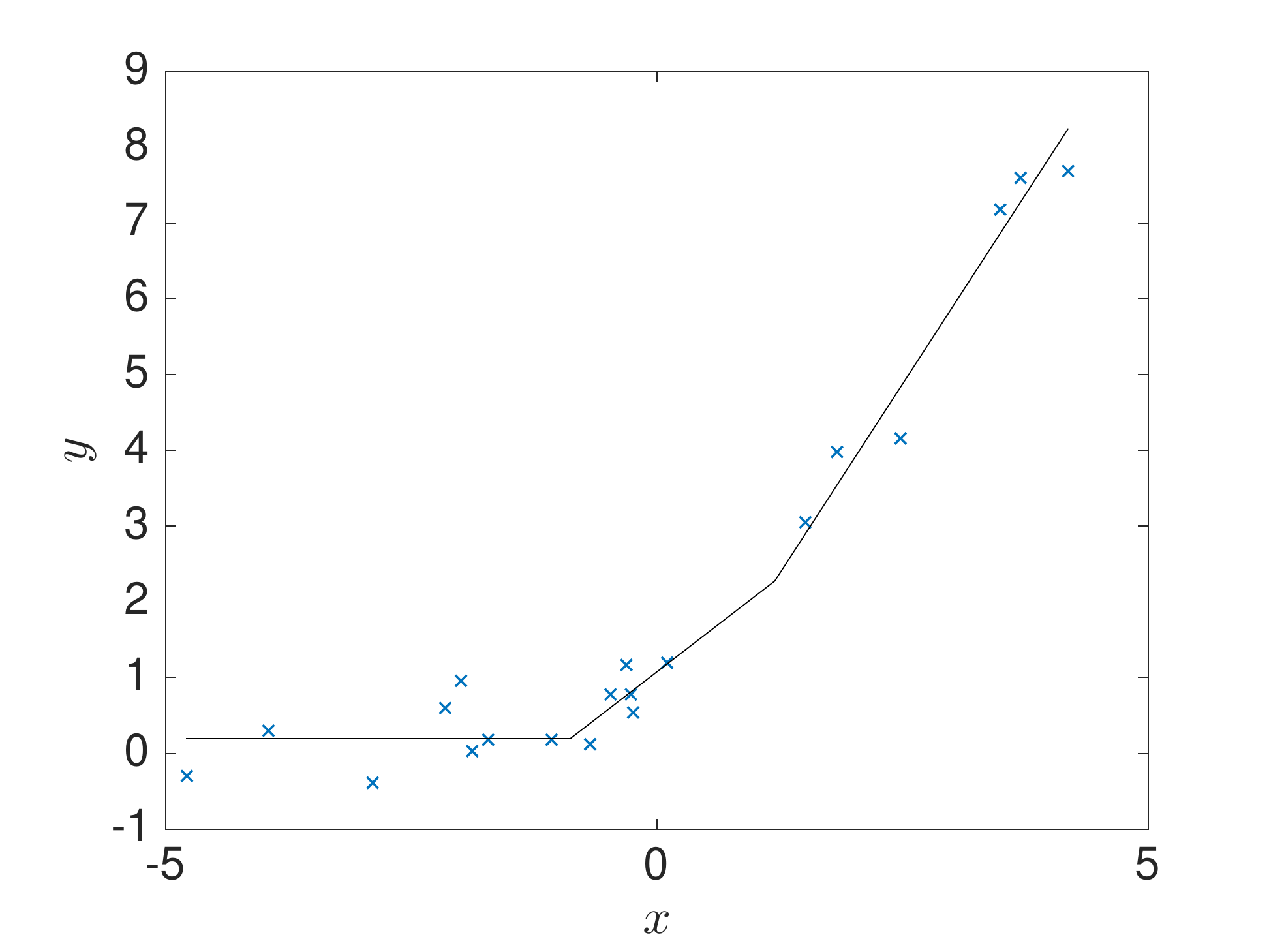}}\subfigure[For the problem of Example~\ref{pol2}. Red diamonds/black crosses are those datapoints over/under-approxiamted by the fitted function.]{\includegraphics[scale=0.45]{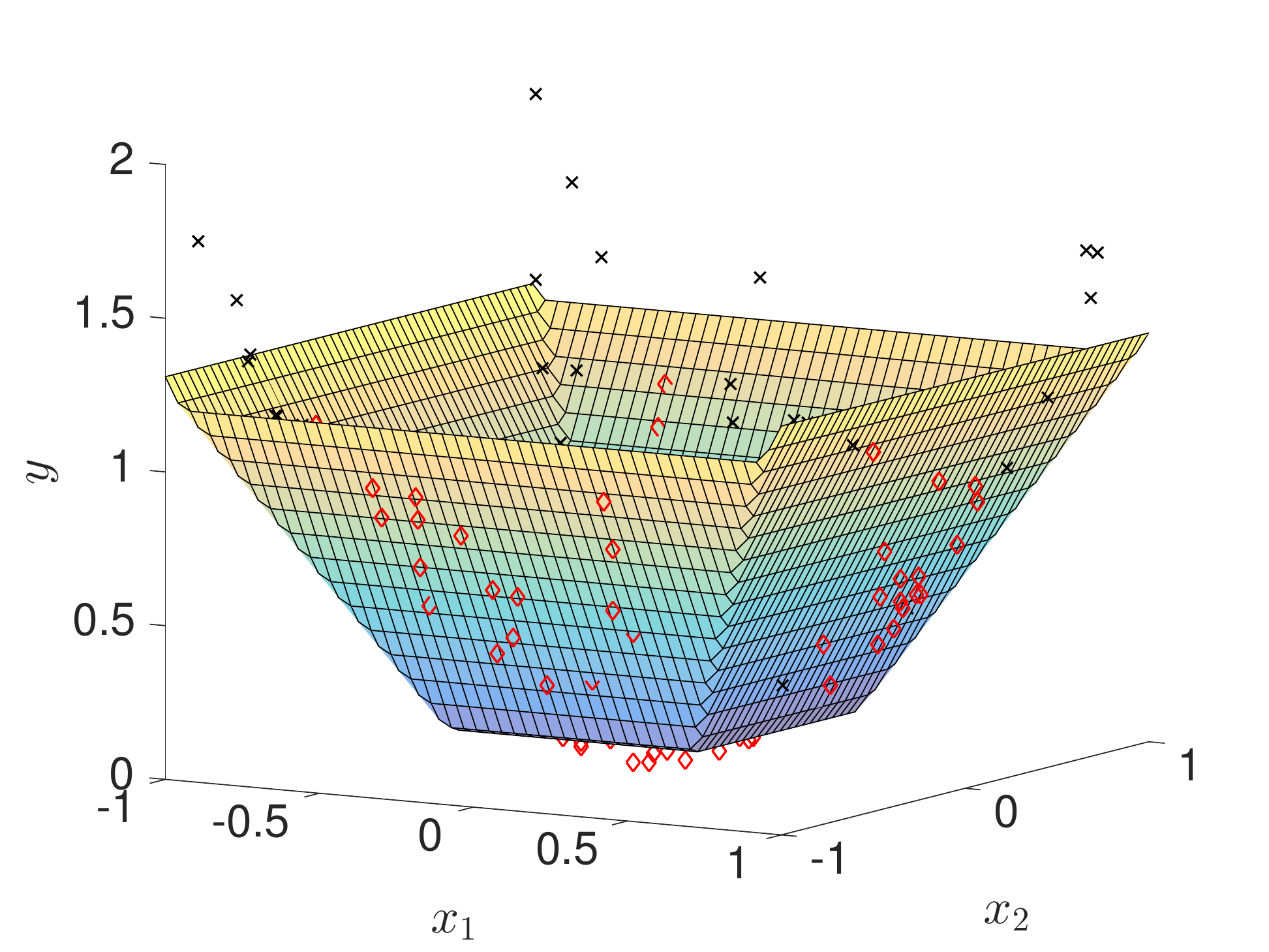}} 
\caption{Polynomial least squares curve fitting.}\label{polypolt}
\end{center}
\end{figure}

\section*{Discussion}

In this paper we presented theory and algorithms for max-plus $2$-norm regression and then demonstrated how they could be applied to 
three different inverse problems. Namely inferring a max-plus linear dynamical systems model from a noisy time series recording, 
inferring the edge lengths of a network from shortest path information and fitting a max-plus polynomial to data. This work leaves open several direction for future research, which can be grouped into four topics; theory, algorithms, inverse problems and applications. 

We saw that whilst the max-plus $\infty$-norm regression problem could be solved easily the $2$-norm variant was more difficult. Clearly the $\infty$-norm is readily compatible with max-plus algebra, whereas the $2$-norm is more suited to classical linear algebra. The max-plus $2$-norm regression problem forces us to bring these different worlds together. Our approach in Section~\ref{2normo} was to develop a formulation of the max-plus matrix vector multiplication map $x\mapsto A\otimes x$, its domain and image etc... in terms of classical linear algebra. With this formulation we could readily apply classical optimization techniques such as steepest descent and Newton's method. However, whilst the matrix vector multiplication map is an extremely simple object viewed through max-plus algebra it become complex and unwieldily when formulated in classical terms. Developing new ways to formulate this problem could lead to the development of superior algorithms. 

Whilst Algorithm~\ref{newton} cannot be guaranteed even to return a local minimum, we did find that it worked well enough in practice. However, developing efficient algorithms that are able to provide some better performance guarantees would be very desirable. Theorem~\ref{setsetset} seems to stand somewhat in the way of this goal, but note that the theorem relates to a specific point for a highly structured (i.e. degenerate) problem. So it may still be possible to developing an efficient residual descending algorithm. 

As noted in the introduction, most applications of max-plus linear dynamical systems use petri-net models, which result in highly structured iteration matrices and noise processes. Further work is needed to adapt the approach used in Section~\ref{tss} to this setting. Similarly to include a control input as in \cite{1184996}. Framing these more general inverse problems explicitly in terms of linear regression problems might inspire new techniques, possibly by trying to develop max-plus analogues of classical linear systems theory. For example, if the matrix $X$ containing the time series vectors can be well approximated by a max-plus low-rank matrix product then what does this tell us about the system? Similarly the techniques we outlined in Section~\ref{netset} only covered a tiny faction of the possible min-plus network  inference problems. For instance we do not yet have a method for the inverse problem associated with the network in Example~\ref{integg2}.  

 We demonstrated in Example~\ref{dolphinegg} that our min-plus linear model order reduction techniques could be applied to analyze `real-world' data, highlighting neighborhood structure in a social network. More work is needed to explore the possible application of this approach. It is also noted in \cite{ANGULO20171} that max-plus low-rank approximate matrix factorization could have applications in non-linear image processing, which provides additional motivation for developing these techniques further.

\section*{Acknowledgement}This work was supported by a University of Bath, Institute for Mathematical Innovation, 50th Anniversary Prize Fellowship. We also thank Henning Makholm for answering a question on Math Stack Exchange, which helped in the formulation of Theorem~\ref{setsetset}.

\bibliographystyle{is-abbrv}
\bibliography{trop}

\newpage

\section{Appendix}
\subsubsection{Brute force method}

A simple way to solve the max-plus $2$-norm regression problem exactly is to search through all of the feasible patterns, computing the normal projections and checking their admissibility for each one in turn. We  then select the closest admissible normal projection for our solution.

To search efficiently through the set of all feasible patterns we consider the tree $T$, with vertices $V_{0},\dots, V_{n}$ at depths $0,1,\dots,n$ respectively. The depth $k$ vertices $V_{k}$ are ordered $k$-tuples of the form $v=(P_{1},\dots,P_{k})$, with $P_{i}\subset\{1,\dots,d\}$, for $i=1,\dots,k$. A vertex $v\in V_{k}$ is parent to  $v'\in V_{k+1}$ if and only if $v'=(v,P_{k+1})$ for some $P_{k+1}\subset\{1,\dots,d\}$. 

In analogy to the feasibility matrix for a pattern \eqref{feasibilitymatrixdef}, define the feasibility matrix for a vertex $v\in V_{k}$, by $F_{v}\in\Rmax^{d\times d}$, with 
\begin{equation}
f_{jk}=\left\{\begin{array}{cc} 0, & \hbox{for $j=k$,} \\ \max\big\{-\infty,\max \{a_{ik}-a_{ij}~:~ j\in P_{i}, ~ i\leq k\}\big\},   & \hbox{otherwise.}\end{array}\right.
\end{equation}
We say that $v\in V_{k}$ is feasible if $\lambda(F_{v}^{\star})=0$. Note that the set of feasible leaf vertices is identical to the set of feasible patterns of support. It is easy to show that if $v\in V_{k}$ is feasible then all of its ancestors are feasible and at least one of its children is feasible. Also if $v\in V_{k}$ is not feasible then none of its children are feasible. 

Next we define an order $\trianglelefteq_{L}$ on the vertices of $T$ by taking an arbitrary ordering $\trianglelefteq$ on the subsets of $\{1,\dots,d\}$ and extending this order lexicographically to $T$. We start at the vertex $v_{0}=()$ and proceed to search through $T$, in order of $\trianglelefteq_{L}$. At each vertex $v\in V_{k}$, we check for feasibility by computing $\lambda(F_{v}^{\star})$, with worst case cost $\uc{O}(d^3)$. If $\lambda(F_{v}^{\star})>0$, then $v$ is non-feasible and we skip all of its decedents. Whenever we reach a feasible leaf vertex $P\in V_{n}$, we compute $\Phi(P,\bm{y})$ and $\Psi(P,\bm{y},\underline{-\infty})$ with cost $\uc{O}(n)$. We check for admissibility by computing $F_{P}\otimes \Psi(P,\bm{y},\underline{-\infty})$ with cost $\uc{O}(d^2)$. If $F_{P}^{}\otimes \Psi(P,\bm{y},\underline{-\infty})=\Psi(P,\bm{y},\underline{-\infty})$ then we compute $\|\bm{y}-\Phi(P,\bm{y})\|_{2}$ and if this residual is the best that we have seen so far, then we save $\Psi(P,\bm{y},\underline{-\infty})$ as the interim optimal solution. The algorithm terminates when we have either checked or skipped all vertices in $T$. See Algorithm~\ref{brute}.

\begin{algorithm}
\caption{ \label{brute}
Returns an optimal solution to Problem~\ref{mppnreg} with $p=2$.}

\medskip

\begin{algorithmic}[1]

\State set $v=()$
\State set $r_{\min}=\infty$
\While{not all of $T$ explored or skipped}
\If{$\lambda(F_{v}^{\star})=0$ $\Leftrightarrow$ $v$ is feasible}
\If {$v=P$ is a leaf vertex}
\State compute $\Phi(P,\bm{y})$ and $\Psi(P,\bm{y},\underline{-\infty})$
\If {$F_{P}^{}\otimes \Psi(P,\bm{y},\underline{-\infty})=\Psi(P,\bm{y},\underline{-\infty})$  $\Leftrightarrow$ $\Phi(P,\bm{y})$ is admissible}
\If {$\|\Phi(P,\bm{y})-\bm{y}\|_{2}<r_{\min}$}
\State update $r_{\min}\mapsfrom \|\Phi(P,\bm{y})-\bm{y}\|_{2}$ and $x\bm{x}\mapsfrom \Psi(P,\bm{y},\underline{-\infty})$
\EndIf
\EndIf
\EndIf
\Else
\State  skip decedents of $v$
\EndIf
\State go to next $v$
\EndWhile
\State return solution $\bm{x}$

\end{algorithmic}

\end{algorithm}

Applied to an $n\times d$ problem, Algorithm~\ref{brute} must check the feasibility of $c_{v}$ vertices and compute normal projections and check admissibility for $c_{l}$ leaf vertices. From \eqref{numer} we have 
$$
c_{l}\leq\sum_{k=1}^{n} \frac{(n+d-k-1)!}{(n-k)!\cdot (d-k)! \cdot (k-1)!},
$$
and a rough bound for $c_{v}$ is given by $c_{v}\leq nd c_{l}$. Computing the normal projection has cost $\uc{O}(n)$ and checking feasibility, using a max-plus eigenvalue solver, has worst case cost $\uc{O}(d^3)$. Thus the worst case cost of Algotithm~\ref{brute} is $\uc{O}(c_{v}d^3)$.

Another, perhaps more efficient, exhaustive approach is to confine the search to patterns $P\in\uc{P}\big(\{1,\dots,d\}\big)^n$, such that $|P_{i}|=1$, for $i=1,\dots,n$. This drastically reduces the size of the tree to be searched through, but we must then solve a more difficult local problem on each admissible leaf vertex. Instead of having to compute the normal projection, which is an unconstrained quadratic program, we need to compute 
\begin{equation}
\min_{\bm{x}\in \Cl\big(X(P)\big)}\|A_{P}(\bm{x})-\bm{y}\|_{2}^2,
\end{equation}
which is a linearly constrained quadratic programming problem. Although such exhaustive search algorithms will never be suitable for applying to very large problems, they can still be extremely valuable for use on smaller data sets and as a way to benchmark the performance of faster approximate algorithms.

\subsection{NP-hardness of finding descent directions}

For $A\in\Rmax^{n\times d}$, $\bm{y}\in\Rmax^n$ and $\bm{x}\in\Rmax^d$ we say that $\bm{z}\in\R^d$ is an \emph{descent direction} for $\bm{x}$ if there exists $\epsilon>0$ such that
\begin{equation}
R(\bm{x}+\mu \bm{z})<R(\bm{x}),
\end{equation}
for all $0<\mu\leq \epsilon$.

\begin{lemma}\label{pointdowny}
Let $A\in\{0,-\infty\}^{n\times d}$ and $\bm{y}\in\Rmax^n$. Then $\bm{z}\in\R^d$ is a descent direction for $\underline{0}\in\Rmax^d$ if and only if
\begin{equation}\label{dotpro}
\langle A\otimes \bm{z},\bm{y}\rangle<0.
\end{equation}
Moreover, in the case that $\sum_{i=1}^{n}y_{i}=0$, if $\underline{0}\in\Rmax^d$ has a descent direction $\bm{z}\in\R^d$ then is has a descent direction $\bm{z}'\in\{0,1\}^{d}$.
\end{lemma}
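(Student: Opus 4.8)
The plan is to study the squared residual $R$ along the ray $\mu\mapsto\mu\bm{z}$ emanating from $\underline{0}$. The key structural fact is that, because every entry of $A$ lies in $\{0,-\infty\}$, the map $\bm{v}\mapsto A\otimes\bm{v}$ is positively homogeneous: for $\mu>0$ we have $(A\otimes(\mu\bm{z}))_i=\mu\max\{z_j:a_{ij}=0\}=\mu(A\otimes\bm{z})_i$, and in particular $A\otimes\underline{0}=\underline{0}$. (I take for granted, as the statement tacitly does, that no row of $A$ is identically $-\infty$, so that $R(\underline{0})=\tfrac{1}{2}\|\bm{y}\|_2^2$ is finite.) Writing $\bm{w}:=A\otimes\bm{z}\in\R^n$, this turns $R(\mu\bm{z})-R(\underline{0})=\tfrac{\mu^2}{2}\|\bm{w}\|_2^2-\mu\langle\bm{w},\bm{y}\rangle$ into a genuine quadratic in $\mu$ with nonnegative leading coefficient. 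The equivalence is then read off directly: whether this quantity dips strictly below $0$ for all sufficiently small $\mu>0$ is governed entirely by the sign of its linear coefficient, and a one-line check shows this is exactly condition \eqref{dotpro}; when instead that coefficient vanishes (in particular when $\bm{w}=\underline{0}$) the quantity is nonnegative for every $\mu>0$, so $\bm{z}$ is not a descent direction. Thus the first half costs essentially nothing once positive homogeneity is observed.

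For the refinement I would decompose $A\otimes\bm{z}$ into $\{0,1\}$-valued directions by thresholding. For $t\in\R$ set $T_t:=\{j:z_j>t\}$, with indicator vector $\mathbf{1}_{T_t}\in\{0,1\}^d$. Since $(A\otimes\bm{v})_i=\max\{v_j:a_{ij}=0\}$, one has $(A\otimes\mathbf{1}_{T_t})_i=\mathbf{1}[(A\otimes\bm{z})_i>t]$; applying the layer-cake identity $w=\int_{\R}\bigl(\mathbf{1}[t<w]-\mathbf{1}[t<0]\bigr)\,dt$ coordinatewise to $\bm{w}=A\otimes\bm{z}$ and pairing with $\bm{y}$ yields $\langle A\otimes\bm{z},\bm{y}\rangle=\int_{\R}\langle A\otimes\mathbf{1}_{T_t},\bm{y}\rangle\,dt$, because the leftover term $\mathbf{1}[t<0]\sum_i y_i$ is exactly what the hypothesis $\sum_i y_i=0$ annihilates. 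The integrand here is a compactly supported step function --- it vanishes for $t\geq\max_j z_j$, and for $t<\min_j z_j$ it equals $\sum_i y_i=0$ since every row support is nonempty --- so the integral is a finite, positive-length-weighted combination of the values $\langle A\otimes\mathbf{1}_{T_t},\bm{y}\rangle$. As that combination has the sign appearing in \eqref{dotpro}, at least one summand $\langle A\otimes\mathbf{1}_{T_t},\bm{y}\rangle$ carries the same nonzero sign; taking $\bm{z}':=\mathbf{1}_{T_t}$, which is then automatically a nonzero vector in $\{0,1\}^d$, and invoking the equivalence from the first half produces the required $\{0,1\}$ descent direction.

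I expect the only real obstacle to be the layer-cake bookkeeping in the second paragraph, and in particular noticing that $\sum_i y_i=0$ is precisely what cancels the constant baseline, so that $\langle A\otimes\bm{z},\bm{y}\rangle$ is recovered as an integral --- hence a positive-weight combination --- of the indicator pairings $\langle A\otimes\mathbf{1}_{T_t},\bm{y}\rangle$. Everything else (positive homogeneity, the quadratic expansion, and the reduction of the refinement back to the first half) is routine. I would also flag in the write-up the standing nondegeneracy assumption that every row of $A$ contains a $0$, without which $R(\underline{0})$ is infinite and the statement would need reinterpreting.
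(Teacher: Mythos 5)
Your argument is correct and, for the second (harder) half, genuinely different from the paper's. The paper proves the $\{0,1\}$ refinement by a discrete merging procedure: it normalises $\bm{z}$ so its extreme entries are $\pm 1$, partitions the coordinates into equivalence classes of equal value, attaches a sign to each class via the partial sums of $\bm{y}$ over the corresponding rows, and repeatedly merges adjacent classes according to a case analysis until only two classes remain, which are then sent to $1$ and $0$. Your layer-cake identity $\langle A\otimes\bm{z},\bm{y}\rangle=\int_{\R}\langle A\otimes\mathbf{1}_{T_t},\bm{y}\rangle\,dt$ --- valid exactly because $\sum_i y_i=0$ cancels the baseline and because $(A\otimes\mathbf{1}_{T_t})_i=\mathbf{1}[(A\otimes\bm{z})_i>t]$ when $A$ is $\{0,-\infty\}$-valued --- replaces that entire scheme by a one-step averaging/pigeonhole argument: a nonzero integral of a compactly supported step function forces some threshold $t$ at which the integrand carries the same sign, and $\mathbf{1}_{T_t}$ is then a descent direction by the first half. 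The two arguments are related in spirit (both decompose $\bm{z}$ along its level sets), but yours is shorter and, more importantly, verifiable: the paper offers no justification that each merge preserves the sign of the pairing, whereas your identity makes the conclusion immediate. Your treatment of the first half coincides with the paper's (``take the derivative''), made explicit via positive homogeneity, and your flagging of the standing assumption that every row of $A$ contains a finite entry is a genuine improvement.

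One caveat you should state rather than paper over: your expansion $R(\mu\bm{z})-R(\underline{0})=\tfrac{\mu^2}{2}\|\bm{w}\|_2^2-\mu\langle\bm{w},\bm{y}\rangle$ is correct, but it makes $\bm{z}$ a descent direction precisely when $\langle\bm{w},\bm{y}\rangle>0$, which is the \emph{opposite} of \eqref{dotpro} as printed. This sign discrepancy originates in the paper itself (whose one-line proof never writes the derivative down) and propagates harmlessly into Theorem~\ref{setsetset} (replace $\bm{y}$ by $-\bm{y}$), but your claim that a ``one-line check shows this is exactly condition \eqref{dotpro}'' is not true verbatim; say explicitly that the condition holds up to a sign, or restate the lemma with the corrected inequality.
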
 
\begin{proof}
The first part follows by taking the derivative in the definition of a descent direction. For the second part note that the zero sum condition means that $\bm{z}$ must have some strictly positive and negative components. We can therefore apply an affine map component wise to $\bm{z}$ to set its largest entry to $+1$ and its smallest entry to $-1$, without loosing the property that $\bm{z}$ is a descent direction. Now list the components of $\bm{z}$ in ascending order $z_{j_{1}}=-1\leq z_{j_{2}} \leq \dots \leq z_{j_{d}}=1$. Define an equivalence relation $\sim$ on the components of $\bm{z}$ that identifies components that are equal to each other. For each equivalence class $S$ compute the sign of 
$$
\sum_{j\in S}\sum_{i~:~ \ell(i)=j}y_{i},
$$
where $\ell\in \{1,\dots,d\}^{n}$ is the subparttern of $P(\bm{z})$.  Whenever there are two consecutive equivalence classes, not including the first or last class, with signs $+$ and $-$ respectively, merge them together and position them at the midpoint of the two original classes. If there is ever a class with zero sum and therefore no sign, merge it with an adjacent class. Whenever the second class in the order has a $-$ sign merge it with the first class, whenever the second from last class has a $+$ sign merge it with the last class. Continue in this way until there are only two classes. By construction if we set $z_{j}=1$ for all $j$ in the upper class and $z_{j}=-1$ for all $j$ in the lower class then we have a descent direction and therefore from the zero sum property we must also have that choosing $\bm{z}'\in\{0,1\}^d$, with  $z'_{j}=1$ for all $j$ in the upper class and $z'_{j}=0$ for all $j$ in the lower class also gives a  descent direction.
\end{proof}
\begin{theorem}\label{setsetset}
Let $F=\{F_{i}~:~i=1,\dots,m\}$ be a family of subsets of $\{1,\dots,n\}$ with $\cup_{i=1}^{m} F_{i}=\{1,\dots,n\}$ and let $1< k< m$, then there exists a subset $\{j_{1},\dots,j_{k}\}\subset\{1,\dots,m\}$ such that $\cup_{i=1}^{k} F_{j(i)}=\{1,\dots,n\}$, if and only if $\underline{0}\in\Rmax^m$ has a descent direction for the regression problem with $A\in\{0,-\infty\}^{(n+m+\frac{m(m-1)}{2}+1)\times m}$, given by
$$
a_{ij}=\left\{\begin{array}{cl} 0 & \hbox{if $i\in F_{j}$, for $i=1,\dots,n$,}  \\ 0 & \hbox{if $(i-n)=j$, for $i=n+1,\dots,n+m$,}  \\  0 & \hbox{if $j\in p(i-n-m)$, for $i=n+m+1,\dots,n+m+m(m-1)/2$} \\  0 & \hbox{if $i=n+m+m(m-1)/2+1$} \\ -\infty & \hbox{otherwise,}\end{array}\right.
$$  
where $p(1),\dots,p\big(m(m-1)/2\big)$ is a list of all unordered pairs of elements of $\{1,\dots,m\}$, and with $\bm{y}\in\R^{(n+m+\frac{m(m-1)}{2}+1)}$, given by
$$
y_{i}=\left\{\begin{array}{cl} c & \hbox{for $i=1,\dots,n$,} \\ a & \hbox{for $i=n+1,\dots,n+m$,} \\ b  & \hbox{for $i=n+m+1,\dots,n+m+\frac{m(m-1)}{2}$,} \\ -nc-ma-\frac{m(m-1)}{2}b & \hbox{for $i=n+m+\frac{m(m-1)}{2}+1$,}\end{array}\right.
$$
were the parameters are given by 
$$
a=-\frac{1}{2}(m-k-1), \quad b=1, \quad c=-(|a|+|b|)m^2.
$$
\end{theorem}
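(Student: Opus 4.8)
The plan is to combine Lemma~\ref{pointdowny} with an explicit combinatorial evaluation of the inner product $\langle A\otimes\bm{x},\bm{y}\rangle$. Observe first that the final entry of $\bm{y}$ is chosen precisely so that $\sum_{i} y_{i}=0$, and that $A\in\{0,-\infty\}^{N\times m}$ with $N=n+m+m(m-1)/2+1$. Hence Lemma~\ref{pointdowny} applies and reduces the question to: does there exist $\bm{z}'\in\{0,1\}^{m}$ with $\langle A\otimes \bm{z}',\bm{y}\rangle<0$? Writing $\bm{z}'$ as the indicator vector $\mathbf{1}_{J}$ of $J=\{j:\bm{z}'_{j}=1\}\subseteq\{1,\dots,m\}$, and noting that $J=\emptyset$ gives inner product $0$ so that $J$ may be assumed nonempty, it suffices to understand $\langle A\otimes\mathbf{1}_{J},\bm{y}\rangle$ as a function of $J$.

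Next I would evaluate $A\otimes\mathbf{1}_{J}$ one row-block at a time. Since each entry of $A$ is $0$ or $-\infty$, we have $(A\otimes\mathbf{1}_{J})_{i}=1$ exactly when some column $j$ with $a_{ij}=0$ lies in $J$, and $(A\otimes\mathbf{1}_{J})_{i}=0$ otherwise; every row of $A$ has a finite entry, so this dichotomy is exhaustive. Block by block: the first $n$ rows return the indicator of $\{i\in\bigcup_{j\in J}F_{j}\}$; the next $m$ rows return $(\mathbf{1}_{J})_{j}$, so their contribution counts $|J|$; the next $m(m-1)/2$ rows return the indicator that the pair $p(\ell)$ meets $J$, so their contribution counts the number of pairs meeting $J$; and the final row returns $\max_{j}(\mathbf{1}_{J})_{j}=1$. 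Substituting these values together with the entries $c,a,b$ of $\bm{y}$, and using the last entry of $\bm{y}$ to cancel the constants $-nc$, $-ma$, $-\tfrac{m(m-1)}{2}b$, the inner product collapses to
\[
\langle A\otimes\mathbf{1}_{J},\bm{y}\rangle=(u-n)c-ra-\tfrac{1}{2}r(r-1),
\]
where $u=|\bigcup_{j\in J}F_{j}|\le n$ and $r=m-|J|$. With $a=-\tfrac12(m-k-1)$ and $b=1$, the elementary identity $-ra-\tfrac12 r(r-1)=\tfrac{r}{2}(m-k-r)$ rewrites this as $(u-n)c+\tfrac{r}{2}(m-k-r)$.

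The argument then finishes by a case split on whether $J$ is a cover. If $J$ does not cover $\{1,\dots,n\}$ then $u\le n-1$, so $(u-n)c\ge|c|=(|a|+|b|)m^{2}\ge m^{2}$, while $r\le m-1$ and $|m-k-r|\le m$ force $\bigl|\tfrac{r}{2}(m-k-r)\bigr|<\tfrac12 m^{2}$; hence $\langle A\otimes\mathbf{1}_{J},\bm{y}\rangle>0$, so no non-covering $J$ is a descent direction. (This is exactly the role of the deliberately oversized constant $c$.) If $J$ does cover, then $u=n$ and $\langle A\otimes\mathbf{1}_{J},\bm{y}\rangle=\tfrac{r}{2}(m-k-r)$, which is negative precisely when $J$ is a cover whose size $|J|=m-r$ is small enough relative to $k$, as read off from $\tfrac{r}{2}(m-k-r)<0$. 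Finally, because $\bigcup_{i=1}^{m}F_{i}=\{1,\dots,n\}$ and $k<m$, any such small cover can be padded with arbitrary further indices to a cover of size exactly $k$ without losing the covering property; so the existence of such a $J$ is equivalent to the existence of the $k$-element subcover requested in Problem~\ref{setcovering}, which gives the stated equivalence.

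The main obstacle is the non-covering case: one must check that $|c|$ genuinely dominates \emph{every} value the quadratic-in-$r$ term can take, uniformly over all non-covering $J$ — this is the only place the precise choice $c=-(|a|+|b|)m^{2}$ is used, and it is what forces the $m^{2}$ factor. Everything else is bookkeeping: correctly cataloguing the four row-blocks of $A$ and checking the short polynomial identity in the second step.
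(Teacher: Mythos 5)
Your approach is exactly the paper's: invoke Lemma~\ref{pointdowny} (after checking $\sum_i y_i=0$), evaluate $\langle A\otimes\mathbf{1}_J,\bm{y}\rangle$ block by block over the four row blocks, and split on whether $J$ covers. Your bookkeeping, the identity $-ra-\tfrac12 r(r-1)=\tfrac{r}{2}(m-k-r)$, and the treatment of the non-covering case via the size of $|c|$ are all correct. The problem is the very last inference. From your own formula, for a covering $J$ with $|J|=p$ and $r=m-p$ one has
$$\langle A\otimes\mathbf{1}_J,\bm{y}\rangle=\frac{r}{2}(m-k-r)=\frac{(m-p)(p-k)}{2},$$
which is strictly negative precisely when $p\leq k-1$ and is \emph{zero} when $p=k$. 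So what your argument actually establishes is: a descent direction exists if and only if there is a cover of size at most $k-1$. That is not equivalent to the existence of a $k$-element subcover (consider an instance whose minimum cover size is exactly $k$), and the sentence ``any such small cover can be padded\dots so the existence of such a $J$ is equivalent to the existence of the $k$-element subcover'' papers over this: padding takes small covers up to size-$k$ covers, but nothing takes a size-$k$ cover down to one of size $\leq k-1$.

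For what it is worth, the discrepancy appears to originate in the theorem's parameter choice rather than in your computation. The paper's own proof asserts the closed form $-\tfrac12(p-m)\bigl(p-(k+\tfrac12)\bigr)$, which \emph{is} strictly negative at $p=k$ and would make the stated equivalence go through; but that expression corresponds to $a=-\tfrac12(m-k-\tfrac32)$, not to the stated $a=-\tfrac12(m-k-1)$. To close the gap you should either adjust $a$ so that the relevant zero of the quadratic falls strictly between $p=k$ and $p=k+1$ (any $a$ with $-2a+1\in(m-k-1,\,m-k)$ works), or else state the conclusion your computation actually supports, namely equivalence with covers of size at most $k-1$ (which still yields NP-hardness of detecting descent directions, but is not the statement as written).
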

\begin{proof} First note that from Lemma~\ref{pointdowny}, if  $\underline{0}\in\Rmax^m$ has a descent direction, then it has a descent direction $\bm{z}\in\{0,1\}^n$. Let $J=\{j~:~z_{j}=1\}$. Then $J$ must be non-empty so \eqref{dotpro} must contain the term coming from the $i=n+m+\frac{m(m-1)}{2}+1$ entry of $\bm{y}$, and therefore, becuase $c$ is chosen to be much larger than $a$ and $b$, we must have $\cup_{j\in J}F_{j}=\{1,\dots,n\}$. Now suppose that $J$ contains precisely $p$ entries, then
$$
\langle A\otimes \bm{z},\bm{y} \rangle=\frac{-1}{2}(p-m)\big(p-(k+\frac{1}{2})\big)\left\{\begin{array}{cc} =0 & \hbox{if $p=m$} \\  >0 & \hbox{if $k<p<m$} \\  <0  & \hbox{if $p\leq k$}\end{array}\right.
$$
Therefore if $\bm{z}\in\{0,1\}^n$ is a descent direction then we have $|J|\leq k$ and $\cup_{j\in J}F_{j}=\{1,\dots,n\}$.
\end{proof}

\subsection{Symmetric low-rank approximate min-plus factorization algorithm}

For $D\in\Rmin^{n\times n}$ the squared residual $R~:~\Rmin^{n\times d}\mapsto \R$, defined by $R(A)=\|D-A\otimes A^{T}\|_{F}^{2}$,
is piecewise quadratic, continuous but non-differentiable. For $A\in\Rmin^{n\times d}$, define $K(A)\in\{1,\dots,d\}^{n\times n}$ by
$$
K(A)_{ij}=\min\big(\arg\min_{k=1}^{d}(a_{ik}+a_{jk})\big).
$$
Then we have
\begin{equation}\label{sfresidual3}
R(A)=R_{K(A)}(A),
\end{equation}
where $R_{K(A)}:\Rmin^{n\times d}\mapsto \R$, is defined by
\begin{equation}\label{sfresidual2}
R_{K(A)}(A')=\sum_{ij=1}^n\big(a'_{i{K}(A)_{ij}}+a'_{j{K}(A)_{ij}}-d_{ij}\big)^2.
\end{equation}
As in the case of the regression problem, Newton's method finds the minimum to the local quadratic piece
\begin{equation}\label{nretonsym}
\uc{N}(A)=\arg\min_{A'\in\Rmin^{n\times d}}R_{K(A)}(A').
\end{equation}
Define $\uc{J}_{A}:\Rmin^{n\times d}\mapsto\Rmin^{n\times d}$, by
\begin{equation}\label{jacobi}
\uc{J}_{A}(A')_{ik}=\frac{d_{ii}{\bf 1}_{ik}+\sum_{j\neq i : {K}(A)_{ij}=k}d_{ij}-a'_{jk}}{2{\bf 1}_{ik}+\sum_{j\neq i : {K}(A)_{ij}=k}1},
\end{equation}
where ${\bf 1}_{ik}=1$ if ${K}(A)_{ij}=k$ and ${\bf 1}_{ik}=0$ otherwise. The map $\uc{J}_{A}$ is the result of applying one iteration of Jacobi's method to the normal equations associated to the linear least squares formulation of \eqref{nretonsym}.
%\begin{lemma}
%Let $A(0)=A$ and let $A(t+1)=\uc{J}_{A}\big(A(t)\big)$ for $t=0,1,\dots$ then
%$$
%\lim_{t\rightarrow\infty}A(t)=\uc{N}(A).
%$$
%\end{lemma}

Therefore we can compute Newton's method updates iteratively using $\uc{J}$. However, as in the case of the regression problem, Newton's method is not guaranteed to converge to a local minimum so we propose using approximate Newton updates with undershooting as in Algorithm~\ref{newton}. By using a small fixed number of $\uc{J}$ iterations we can cheaply approximate the Newton step. We then update by moving to a point somewhere between the previous state and the result of our approximate Newton computation. By gradually reducing the length of the step we can avoid getting stuck in the periodic orbits that prevent standard Newton's method from converging. As in the non-symmetric case the choice of initial factorization is important. One possibility is to take a random selection of the columns of $D$. See Algorithm~\ref{newtomsymfac1}. 

\begin{algorithm}
\caption{ \label{newtomsymfac1}
Returns an approximate solution to Problem~\ref{minfacsym}. Parameters are the number of Jacobi iterations per step $t\in\mathbb{N}$ and the shooting factor $\mu\in\mathbb{R}_{+}$. These parameters may be allowed to vary during the computation. }
\begin{algorithmic}[1]
\State draw uniformly at random $\{w_{1},\dots,w_{m}\}\subset\{1,\dots,n\}$
\State set $A=D_{W}$
\While{ not converged}
\State compute $\hat{\uc{N}}(A)=\uc{J}^{~t}_{A}(A)$
\State update $F\mapsfrom \mu\hat{\uc{N}}(A)+(1-\mu)A$
\EndWhile
\State return approximate solution $A$
\end{algorithmic}
\end{algorithm}

Formulating the map $\uc{J}_{A}$ has cost $\uc{O}(n^2d)$ and applying it has cost $\uc{O}(n^2)$. Thus the approximate Newton computation (line 4) has cost $\uc{O}\big(n^2(d+t)\big)$, where $t$ is the number of Jacobi iterations used at each step. Adapting Algorithm~\ref{newtomsymfac1} to solve Problem~\ref{minfacsym2} is simple. We simply ignore any contribution to the residual $R$ or the local residuals $R_{K}$ coming from the diagonal entries of the matrix.

\end{document}